\documentclass[12pt]{article}
\usepackage[utf8]{inputenc}
\usepackage[T1]{fontenc}
\usepackage[english]{babel}
\usepackage{xcolor}
\usepackage[top=2cm, bottom=2cm, left=2cm, right=2cm]{geometry}
\usepackage{layout}
\usepackage{setspace}
\usepackage{charter}
\usepackage{url}
\usepackage{graphicx}
\usepackage{tikz}
\usetikzlibrary{matrix,arrows}
\usepackage{amsmath}
\usepackage{amssymb}
\usepackage{mathrsfs}
\usepackage{amsthm}
\usepackage{hyperref}
\usepackage{float}
\usepackage[width = 1 \textwidth]{caption}
\usepackage[format=plain,
            textfont=it]{caption}
\usepackage{titlesec}
\titleformat{\paragraph}
{\normalfont\normalsize\bfseries}{\theparagraph}{1em}{}
\titlespacing*{\paragraph}
{0pt}{3.25ex plus 1ex minus .2ex}{1.5ex plus .2ex}
\usepackage{enumitem}
\setlist{itemsep = 3 pt}
\usepackage{mathtools}
\usepackage{stmaryrd}

\newcommand{\R}{\mathbb{R}}
\newcommand{\C}{\mathbb{C}}
\renewcommand{\P}{\mathbb{P}}
\newcommand{\Z}{\mathbb{Z}}
\newcommand{\N}{\mathbb{N}}

\makeatletter
\newcommand{\specialcell}[1]{\ifmeasuring@#1\else\omit$\displaystyle#1$\ignorespaces\fi}
\makeatother

\theoremstyle{definition}
\newtheorem{lem}{Lemma}[section]
\newtheorem{prop}{Proposition}[section]
\newtheorem{defin}{Definition}[section]
\newtheorem{nota}{Notation}[section]
\newtheorem{thm}{Theorem}[section]
\newtheorem*{thm*}{Theorem}
\newtheorem{cor}{Corollary}[section]
\newtheorem{rmk}{Remark}[section]

\newtheorem{example}{Example}[section]

\let\oldbibliography\thebibliography
\renewcommand{\thebibliography}[1]{\oldbibliography{#1}
\setlength{\itemsep}{-0pt}} %Reducing spacing in the bibliography.

\title{Maximality and configuration spaces on real algebraic curves}
\author{ \small Aloïs Demory\\ \small Eberhard Karls Universität Tübingens \\\small demory@math.uni-tuebingen.de}
\date{\small September 2026}

\begin{document}

\maketitle
%\newpage

\begin{abstract}
    Let $C$ be a non-singular real algebraic curve of genus $g$. The real structure on $C$ induces a real structure on the configuration space of $n$ distinct unlabeled points on $C$. We prove that this configuration space is maximal (in the sense of the Smith-Thom inequality) if and only if $C$ is maximal or the genus of $C$ is zero and $n$ is even.
\end{abstract}

\tableofcontents

\section{Introduction}

\begin{defin}
    A non-singular real algebraic curve $C$ of genus $g$ is a non-singular complex algebraic curve $\mathbb C C$ of genus $g$ endowed with an anti-holomorphic involution $conj$. The fixed point set of $conj$ is denoted by $\R C$ and called \textit{the real part of} $C$.
\end{defin}

Each connected component of $\R C$ is homeomorphic to a circle. It has been known since the second half of the $19$-th century (see \cite{harnack}, \cite{K}) that the real part of a non-singular real algebraic curve of genus $g$ cannot have more than $g+1$ connected components. The modern generalization of this statement is the following result.

\begin{thm*}
    \textbf{(Smith-Thom inequality, see e.g. \cite{bredon})} Let $M$ be a smooth finite-dimensional manifold without boundary and endowed with a smooth involution $\tau$ with fixed point set $Fix(\tau)$. Then 
    \begin{equation*}
        \sum_{i \geq 0} \dim_{\Z_2} H^i(Fix(\tau); \Z_2) \leq \sum_{i \geq 0} H^i(M; \Z_2),
    \end{equation*}
    provided both sums are finite.
\end{thm*}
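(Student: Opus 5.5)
The plan is the classical Smith theory argument. Write $G=\Z_2=\langle\tau\rangle$ and $F=Fix(\tau)$. The key tool is the Smith exact sequences with $\Z_2$-coefficients. To make them available, I would first replace $M$ by a finite $G$-CW complex in which $F$ is a subcomplex. For a smooth involution this is possible by equivariant triangulation; and since $M$ has no boundary, the fixed set is a closed submanifold of $M$. I will first argue for compact $M$. In the general case I would exhaust $M$ by compact invariant submanifolds with boundary and pass to the limit, using that both sums are finite so the relevant dimensions stabilize. Alternatively, one can work throughout with Čech/compactly supported cohomology as in Bredon, since only finiteness of the two sums enters.

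Next, on the $\Z_2[G]$-cochain complex $C^*(M;\Z_2)$, consider the element $\sigma=1+\tau$. Because we are in characteristic $2$, we have $\sigma^2=0$ and $\ker\sigma=\operatorname{im}\sigma$ on free orbits of cells. Define the subcomplexes
\[
C^*_\sigma=\sigma C^*(M)+C^*(F).
\]
Here $C^*(F)$ denotes cochains supported on the fixed cells. This gives short exact sequences
\[
0\to C^*_\sigma(M)\oplus C^*(F)\to C^*(M)\xrightarrow{\ \sigma\ } C^*_\sigma(M)\to 0 .
\]
Write $h^i_\sigma$ for the dimension of the cohomology of $C^*_\sigma$. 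The corresponding long exact sequence gives, degree by degree,
\[
h^i_\sigma+\dim H^i(F)\le \dim H^i(M)+h^{i+1}_\sigma .
\]
This is just exactness at the middle term, i.e. $\dim B\le\dim A+\dim C$ for an exact sequence $A\to B\to C$.

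Then sum these inequalities over $i\ge k$. The $h_\sigma$ terms telescope, which gives
\[
h^k_\sigma+\sum_{i\ge k}\dim H^i(F)\le\sum_{i\ge k}\dim H^i(M).
\]
The boundary terms vanish in high degrees because the complex is finite-dimensional. Taking $k=0$ and discarding $h^0_\sigma\ge0$ yields the inequality.

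The main obstacle is the first step. It requires checking the exactness of the Smith sequence, namely that $\ker\sigma=C_\sigma(M)\oplus C(F)$ (with $\sigma$ and $\bar\sigma=1+\tau$ coinciding mod $2$). It also requires reducing the noncompact case to the compact one. I would handle exactness cell by cell. On a free orbit $\{e,\tau e\}$, the cochains form a copy of $\Z_2[G]$, whose $\sigma$-kernel is exactly $\sigma\Z_2[G]$. On a fixed cell, $\sigma$ acts as zero and the cell contributes to the $C(F)$ summand. Together these give the splitting.
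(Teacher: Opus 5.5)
The paper gives no proof of this statement; it only cites Bredon. Your outline follows Bredon's Smith-sequence argument, but you have transcribed it into cochains, and there the key splitting is false. Cochains supported on the fixed cells do not form a subcomplex of $C^*(M)$. In a triangulation, if a fixed simplex $f$ is a face of a free simplex $e$, then $\delta f^*$ is nonzero on $e$ and on $\tau e$.

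Structurally, in characteristic $2$ one has $\ker\sigma=C^*(M)^G\cong C^*(M/G)$ and $\sigma C^*(M)\cong C^*(M/G,F)$. So $0\to\sigma C^*\to\ker\sigma\to C^*(F)\to 0$ is the cochain sequence of the pair $(M/G,F)$, and it does not split as complexes. Your cell-by-cell check only proves a splitting of graded vector spaces, which does not see $\delta$.

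A concrete counterexample is the reflection of $\mathbb S^1$ with fixed vertices $p,q$ and free edges $a,\tau a$. There $H^*(\ker\sigma)=H^*([0,1])$ has total dimension $1$, while $H^*_\sigma\oplus H^*(F)$ has total dimension $1+2=3$. So the long exact sequence you write down does not exist. Two related slips: your definition $C^*_\sigma=\sigma C^*(M)+C^*(F)$ contradicts using $C^*_\sigma$ as the image of $\sigma$. Also, in a cochain long exact sequence the connecting map raises degree, so the inequality with $h^{i+1}_\sigma$ you write is really the homological one.

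The fix is to do what Bredon does and work with chains. Since $F$ is a subcomplex, $C_*(F)$ is a subcomplex. $\sigma C_*(M)$ is also a subcomplex, being the image of a chain map, and the two meet in $0$. Hence $\ker\sigma=\sigma C_*(M)\oplus C_*(F)$ holds as chain complexes. The sequence $0\to\sigma C_*\oplus C_*(F)\to C_*\xrightarrow{\sigma}\sigma C_*\to0$ then gives $h_i+\dim H_i(F)\le \dim H_i(M)+h_{i+1}$. Over $\Z_2$ you may replace homology dimensions by cohomology dimensions. Equivalently, you can dualize to $0\to(\sigma C_*)^\vee\to C^*(M)\to(\sigma C_*)^\vee\oplus C^*(F)\to 0$, with the split term on the quotient side and restriction to $F$.

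Your treatment of non-compact $M$ is also not justified, and this is exactly the case the paper needs, since $B(\C C,n)$ is an open manifold. The finite-stage inequalities $b^*(F\cap K_j)\le b^*(K_j)$ do not pass to the limit in the required direction. The reason is that $b^*(K_j)$ need not approach $b^*(M)$: compact pieces can carry homology that dies further out, so nothing stabilizes in general.

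No reduction to the compact case is needed, however. Take an equivariant triangulation of $M$, which is infinite but finite-dimensional and becomes regular after subdivision; the chain-level Smith sequence then holds verbatim. One has $h_i=0$ for $i>\dim M$. Downward induction on $h_i+\dim H_i(F)\le h_{i+1}+\dim H_i(M)$ shows that all $h_i$ are finite once the Betti numbers of $M$ are, and that is all your telescoping uses.
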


\begin{nota}
    The sum of the $\Z_2$-dimensions of the cohomology groups with coefficients in $\Z_2$ of a manifold $X$ will be denoted by $b^*(X)$ and will be abusively called the \textit{total Betti number} of $X$.
\end{nota}

\begin{defin}
    If $(M, \tau)$ satisfies $b^*(Fix(\tau)) = b^*(M)$, then $(M, \tau)$ is said to be maximal.
\end{defin}

The study of maximal real algebraic curves (in broader generality, of maximal real algebraic varieties) is a central subject in the field of topology of real algebraic varieties (see e.g. \cite{wilson}, \cite{kharlamov}). 
The investigation of the maximality of spaces of points on real algebraic curves was initiated by I. Biswas and S. D'Mello in \cite{symmetric_product_curves}. Their results were later generalized by M. Franz in \cite{symmetric_products_generalized}. Recall that the $n$-th symmetric product $Sym^n(X)$ of a manifold $X$ is the quotient of $X^n$ by the action of the symmetric group $\mathcal S_n$. If $X$ is a real algebraic variety, the space $Sym^n(\C X)$ naturally inherits a real structure from the one on $(\C X)^n$.

\begin{thm*}
    \textbf{(\cite{symmetric_products_generalized})} If $X$ is a maximal quasi-projective real algebraic variety, then $Sym^n(\C X)$ endowed with its natural real structure is maximal.
\end{thm*}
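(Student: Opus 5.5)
The strategy is to replace the numerical condition $b^*(Fix)=b^*(M)$ by an algebraic criterion that behaves well under symmetric products. Write $G=\Z_2$ and let $H^*_G$ denote Borel equivariant cohomology with $\Z_2$ coefficients. For a finite-dimensional $G$-CW complex $M$, the localization theorem gives
\[
H^*_G(M)[u^{-1}]\cong H^*(Fix(\tau))\otimes \Z_2[u,u^{-1}].
\]
From this one obtains the standard equivalence: $(M,\tau)$ is maximal if and only if $H^*_G(M)$ is a free $\Z_2[u]$-module, equivalently the Serre spectral sequence of $M\to M_{hG}\to BG$ degenerates at $E_2$.

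\textbf{Reduction to a chain-level criterion.} Since $X$ is quasi-projective, $\C X$ admits a finite $G$-CW structure. Hence so does $Sym^n(\C X)$, with the induced involution, and the criterion above applies to it. I would then translate freeness of $H^*_G$ into a statement about the cellular cochain complex $C^*(\C X;\Z_2)$ as a complex of $\Z_2[G]$-modules. Maximality of $X$ should say that this complex is quasi-isomorphic, as a $G$-complex, to a sum of shifted trivial modules $\Z_2$ and free modules $\Z_2[G]$ whose homologies account exactly for $b^*(\C X)$, with no nontrivial extensions between them.

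\textbf{Dold's theorem and symmetric products.} Dold showed that $H_*(Sym^n Y;\Z_2)$ depends only on $H_*(Y;\Z_2)$. At the chain level, $C_*(Sym^n Y)$ is quasi-isomorphic to a functor of $C_*(Y)$ built from symmetric powers of simplicial vector spaces via Dold--Kan. The key step is to make this functor $G$-equivariant. Then the decomposition of $C_*(\C X)$ from the previous step, into pieces with trivial action and free pieces, propagates to $C_*(Sym^n\C X)$. The resulting equivariant homology should be free over $\Z_2[u]$ with rank equal to $b^*(Sym^n \C X)$.

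As a consistency check, I would also compute both sides directly. Macdonald-type generating functions give $\sum_n b^*(Sym^n \C X)\,t^n$ in terms of the Betti numbers of $\C X$. The fixed locus decomposes as a union over $k$ of strata $Sym^{n-2k}(\R X)\times Sym^k(\C X/conj)$, glued along diagonal degenerations. Using $b^*(\R X)=b^*(\C X)$, the two generating functions can then be compared. Smith--Thom already gives one inequality, so only equality has to be checked.

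\textbf{Main obstacle.} The hard step is the equivariant version of Dold's functoriality. The symmetric power functor interacts with the $G$-action nontrivially on cells permuted by the involution, so one must show that it sends free $G$-summands to free summands (up to trivial pieces) without creating new $u$-torsion. I would first attack this by reducing, via the chain-level splitting, to the case where $\C X$ is replaced by a wedge of equivariant spheres with trivial or free action. For such spaces the equivariant cohomology of symmetric products can be computed explicitly, using Milgram's description of $H_*(Sym^\infty)$ and its finite filtrations, to check freeness over $\Z_2[u]$.
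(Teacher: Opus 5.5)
The paper gives no proof of this statement; it only cites Franz. Your framework is sound: maximality is equivalent to freeness of $H^*_G$ over $\Z_2[u]$, and Dold's theorem is the right tool. There is, however, a genuine gap, and it begins with your chain-level reformulation, which is false as stated.

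\textbf{The criterion.} A summand $\Z_2[G]$ of $H^*(\C X;\Z_2)$ is never harmless. Since $H^p(G;\Z_2[G])=0$ for $p>0$, the Serre spectral sequence gives $b^*(Fix(\tau))\le b^*(\C X)-2f$, where $f$ is the number of free summands. Two points exchanged by the involution (the real variety $x^2+1=0$) fit your description exactly, with free modules and no extensions, and they are not maximal. Their symmetric square $\{2a,a+b,2b\}$ again has a free piece and a one-point fixed set. So ``free pieces go to free pieces'' is the failure mode, not the goal. The correct criterion is that $C_*(\C X;\Z_2)$ is isomorphic in $D(\Z_2[G])$ to $H_*(\C X;\Z_2)$ with \emph{trivial} action. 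It follows from freeness in two steps:
\begin{itemize}
\item The Gysin sequence $\cdots\to H^{*-1}_G\xrightarrow{u}H^*_G\to H^*(\C X)\to\cdots$ shows that restriction $H^*_G(\C X)\to H^*(\C X)$ is surjective.
\item Lifts $c_i\in H^{n_i}_G(\C X)=\mathrm{Hom}_{D(\Z_2[G])}(C_*(\C X),\Z_2[n_i])$ of a basis then assemble into a quasi-isomorphism $C_*(\C X)\to\bigoplus_i\Z_2[n_i]$.
\end{itemize}

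\textbf{The main step.} The step you call the main obstacle is left unproved, and your proposed attack cannot work. A chain-level splitting gives no $G$-map from $\C X$ to a wedge of spheres. Moreover, the free-action pieces are themselves non-maximal (already $n=1$ fails), so a piece-by-piece check cannot succeed.

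The missing observation is that no cell-by-cell analysis is needed, because Dold's model is strictly natural. For a simplicial set $Y$ (take $Y=\mathrm{Sing}(\C X)$ with its $G$-action), $\Z_2[Sym^nY]\cong(\Z_2[Y]^{\otimes n})_{\mathcal S_n}$ levelwise, naturally in $Y$, so equivariance is automatic. Two further facts finish it:
\begin{itemize}
\item Levelwise symmetric powers preserve weak equivalences of simplicial $\Z_2$-vector spaces (Dold--Puppe).
\item A $G$-map that is an underlying quasi-isomorphism is an isomorphism in $D(\Z_2[G])$.
\end{itemize}
Transporting the corrected quasi-isomorphism through Dold--Kan shows that $C_*(Sym^n\C X)$ is isomorphic in $D(\Z_2[G])$ to a complex with trivial action. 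Hence $H^*_G$ is free and $Sym^n(\C X)$ is maximal.

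\textbf{A shorter route.} There is a route, essentially that of the cited reference, which avoids equivariant chain models altogether.
\begin{itemize}
\item Maximality is equivalent to injectivity of $\iota_*:H_*(\C X)\to H_*(EG\times_G\C X)$, where $\iota$ is the fibre inclusion.
\item The map $EG\times_G Sym^n\C X\to Sym^n(EG\times_G\C X)$, $[e,\sum x_i]\mapsto\sum[e,x_i]$, composed with the fibre inclusion of $Sym^n\C X$, equals $Sym^n(\iota)$.
\item The ordinary Dold theorem says $Sym^n$ preserves $\Z_2$-homology injections, so the fibre inclusion of $Sym^n\C X$ is injective in homology.
\end{itemize}

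\textbf{The generating-function check.} This cannot substitute for either argument. Your strata of the fixed locus are only locally closed and are glued along degenerations, and total Betti numbers are not additive over such decompositions.
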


The space $Sym^n(\C X)$ can be seen as the space of configurations of $n$ unlabeled, non-necessarily distinct points on $\C X$. The goal of this note is to begin the study of maximality questions for the subspace of $Sym^n(\C X)$ formed by the configurations of $n$ unlabeled \textit{distinct} points.

\begin{nota}
    Let $M$ be a manifold and let $n$ be a positive integer. We denote by $B(M,n)$ the space of configurations of $n$ distinct unlabeled points in $M$.    
\end{nota}

Given a non-singular real algebraic curve $C$, the space $B(\C C,n)$ is naturally endowed with a real structure induced by the one on $Sym^n(\C C)$ and whose fixed point set $\R B(\C C,n)$ is the set of configurations of $n$ points in $C$ that are invariant under the anti-holomorphic involution of $C$. The goal of the text is to prove the following result.

\begin{thm}
    \label{main_thm}
    Let $g$ be a non-negative integer and let $n$ be a positive integer. Let $C$ be a non-singular real algebraic curve of genus $g$. Then the configuration space $B(\C C,n)$ endowed with its natural real structure is maximal if and only if $C$ is maximal or $g=0$ and $n$ is even.
\end{thm}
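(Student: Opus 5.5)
We will compute both sides of the Smith--Thom inequality explicitly and compare them.

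\textbf{Total Betti number of $B(\C C,n)$.} We use the known mod $2$ homology of unordered configuration spaces of surfaces (B\"odigheimer--Cohen--Taylor). For a closed orientable surface $\Sigma_g$, $H_*(B(\Sigma_g,n);\Z_2)$ is given as a graded vector space by an explicit formula in terms of $H_*(\Sigma_g;\Z_2)$. For $g=0$ the relevant known facts are: $B(S^2,1)=S^2$ has $b^*=2$, $B(S^2,2)\simeq \R P^2$ has $b^*=3$, and for $n\ge 3$ we have $b^*(B(S^2,n))=4$. We will first rewrite these formulas in closed form for the total Betti number, as a function $\beta(g,n)$. A convenient route is to puncture the surface: $B(\Sigma_g\setminus pt,n)$ has an explicit mod $2$ homology, and the long exact sequence (or the Bödigheimer--Cohen--Taylor splitting) then gives the closed case.

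\textbf{The real part.} An invariant configuration consists of $k$ points in $\R C$ and $m$ conjugate pairs, with $k+2m=n$. Hence
\[
\R B(\C C,n)=\bigsqcup_{k+2m=n} B(\R C,k)\times B\bigl((\C C\setminus \R C)/conj,\ m\bigr).
\]
Suppose $\R C$ has $l$ components. Then $B(\R C,k)$ is a disjoint union of pieces $\prod_i B(S^1,k_i)$, and each $B(S^1,k_i)\simeq S^1$ when $k_i\ge 1$. The quotient $S=(\C C\setminus\R C)/conj$ is an open surface: orientable of genus $(g+1-l)/2$ with $l$ punctures if $C$ is dividing, and otherwise nonorientable with $l$ punctures and Euler characteristic $(2-2g)/2 - 0 = 1-g$ after accounting for the removed circles. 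The mod $2$ homology of $B(S,m)$ for an open surface $S$ is known: it depends only on $b_1(S)$ and is a symmetric algebra count, namely the coefficient extraction of $\prod (1+t)^{b_1}/(1-t^2)$-type generating functions. This gives a generating function in $n$ for $b^*(\R B(\C C,n))$.

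\textbf{Comparison.} We then show that $b^*(\R B)=\beta(g,n)$ exactly when $l=g+1$, or when $g=0$ and $n$ is even. For the reverse inequality in the non-maximal cases, the contribution $b^*(\R B)$ is increasing in $l$ for fixed $g$ (note that $b_1(S)=g+l-1$ and the count of arrangements of $k$ points into $l$ circles both grow with $l$). So it suffices to exhibit a strict deficit at one non-maximal configuration. For $g=0$, $l=0$, only $m=n/2$ contributes, with $S=\R P^2$ minus nothing. Here we need $B(\R P^2,m)$ and must check that $b^*(B(\R P^2,n/2))=b^*(B(S^2,n))$ for $n$ even, which holds for instance for $n=2$, where both sides equal $3$. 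If $n$ is odd, the real part is empty, so the configuration space is not maximal.

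The main obstacle will be getting a usable closed form for $\beta(g,n)$ for closed surfaces and then matching the generating functions. We will try to organise both sides as coefficients of the same rational generating function in $t$. Then maximality reduces to identifying $\sum_{k}\binom{\cdot}{\cdot}$-type identities, and non-maximality to a positivity argument on the differing terms.
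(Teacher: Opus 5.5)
Your architecture matches the paper's: Napolitano-type formulas for $B(\C C,n)$, the splitting of $\R B(\C C,n)$ into pieces $\prod_i B(S^1,m_i)\times B(S,\frac{n-k}{2})$ with $S=(\C C\setminus\R C)/conj$, and a generating-function comparison. But there is a genuine gap. The identity $b^*(\R B(\C C,n))=b^*(B(\C C,n))$ for maximal $C$, the heart of the ``if'' direction, is only announced, and the inputs you plan to feed into it are wrong.

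With $\Z_2$ coefficients everything is governed by the binary partition numbers $\mathcal A$. For an open surface $b^*(B(S,m))=\sum_k\binom{k+b_1-1}{b_1-1}\mathcal A(m-k)$, and a closed surface adds the same sum with $m-1$. The generating functions are therefore $(1-t)^{-b_1}P(t)$, resp. $(1+t)(1-t)^{-b_1}P(t)$, where $P(t)=\sum_m\mathcal A(m)t^m=\prod_{j\ge0}(1-t^{2^j})^{-1}$; this is not a $(1+t)^{b_1}/(1-t^2)$-type series. Three consequences:
\begin{itemize}
\item $b^*(B(S^2,n))=\mathcal A(n)+\mathcal A(n-1)$ is unbounded; it equals $6$ for $n=4$, not $4$.
\item $b_1(S)=g$ for every $l\ge1$, not $g+l-1$; your own description of $S$ gives $2\cdot\frac{g+1-l}{2}+l-1=g$.
\item With your numbers the theorem would already fail at $g=0$, $n=4$. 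For $\P^1$ one gets $b^*(\R B)=2+2+2=6>4$, which would even contradict Smith--Thom, and $b^*(B(\R\P^2,2))=6\neq4$.
\end{itemize}

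With correct inputs your plan does close. Each real circle contributes $\sum_m b^*(B(S^1,m))u^m=\frac{1+u}{1-u}$. So for $l\ge1$ one has $\sum_n b^*(\R B(\C C,n))u^n=\bigl(\frac{1+u}{1-u}\bigr)^{l}(1-u^2)^{-g}P(u^2)$, while $\sum_n b^*(B(\C C,n))u^n=(1+u)(1-u)^{-2g}P(u)$. The missing key fact is $P(u)=P(u^2)/(1-u)$, i.e.\ $\mathcal A(2m+1)=\mathcal A(2m)=\sum_{i\le m}\mathcal A(i)$. It converts the ``half-size'' partition counts coming from conjugate pairs into full-size ones, and then the two series coincide exactly when $l=g+1$. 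The paper reaches the same identity coefficient by coefficient instead: $8k$ and $8k+4$ for $g=1$, and a $[z^{2g-1}]$ extraction producing $\sum_\ell\binom{g+1}{\ell}2^\ell\binom{2k-1}{\ell-1}$ for $g\ge2$.

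The ``only if'' direction is also incomplete. For $\R C\neq\emptyset$, monotonicity in $l$ does hold, but the reason is that $b_1(S)=g$ is independent of $l$. Padding $(m_1,\dots,m_l)\mapsto(m_1,\dots,m_l,0,\dots,0)$ injects the components into those of a maximal curve $M$ of genus $g$, preserving total Betti numbers, and misses $(0,\dots,0,n)$. Hence $b^*(\R B(\C C,n))<b^*(\R B(\C M,n))\le b^*(B(\C C,n))$ by Smith--Thom. That is what is needed, rather than ``a strict deficit at one configuration''.

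The case $\R C=\emptyset$, $g\ge1$, $n$ even is not treated at all. There $S=\C C/conj$ is closed with $b_1=g+1$, so neither padding nor your formula applies, and you need a separate computation. One option is a direct coefficient comparison, as in the paper. Another is to note that $(1+u^2)(1-u^2)^{-g-1}P(u^2)$ is the even part of the $l=1$ series, hence strictly below $b^*(B(\C C,n))$ when $g\ge1$ (and equal to it when $g=0$).

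Finally, checking $n=2$ does not settle $g=0$ with empty real part. You need $b^*(B(\R\P^2,m))=\sum_{i\le m}\mathcal A(i)+\sum_{i\le m-1}\mathcal A(i)=\mathcal A(2m)+\mathcal A(2m-1)=b^*(B(S^2,2m))$ for every $m$, and the $\P^1$ case for every $n$, odd or even.
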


The proof consists in computing the total Betti number of $B(\C C,n)$ and $\R B(\C C,n)$, essentially using the results of \cite{fuks_configuration} and \cite{napolitano_configuration}.\\
\\
\textbf{Acknowledgements.} 
I would like to thank T. Blomme and E. Dawson for fruitful discussions and comments on an earlier version of the text. 
A. D. is supported by the Walter Benjamin Programme of the Deutsche Forschungsgemeinschaft.

\section{Preliminaries}

\subsection{Partitions in powers of $2$}

Let $n$ be a positive integer. We denote by $\mathcal{A}(n)$ the number of ways $n$ can be written as a sum of powers of $2$. It will become clear in Section \ref{subsection_cohomology_configuration} that these numbers are ubiquitous in the computations of the $\Z_2$-cohomology groups of configurations spaces on surfaces. We recall some of their well-known properties. By convention, one has $\mathcal A(0) = 1$.

\begin{lem}
    \label{lem_partitions_puissances_2}
    Let $n$ be a non-negative integer. One has
    \begin{itemize}
    \item[(1)] $\mathcal A(2n+1) = \mathcal A(2n)$;
    \item[(2)] $\mathcal A(2n) = \mathcal A(2n-1) + \mathcal A(n)$;
    \item[(3)] $\mathcal A(n) = \sum_{i=0}^{\lfloor \frac{n}{2} \rfloor} \mathcal A (i)$. \hfill \qedsymbol
\end{itemize}
\end{lem}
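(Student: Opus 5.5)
The plan is to argue through the combinatorics of the partitions themselves, using the bijection between partitions of $n$ into powers of $2$ and multisets of powers of $2$ summing to $n$. The key dichotomy is whether a partition contains a part equal to $1$. Throughout, $\mathcal A(0)=1$ corresponds to the empty partition.

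For (1), I will observe that every partition of an odd number $2n+1$ into powers of $2$ must contain at least one part $1$, since all other parts are even. Removing one such part gives a map to partitions of $2n$, and adding a part $1$ gives the inverse map. This bijection shows $\mathcal A(2n+1)=\mathcal A(2n)$.

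For (2), with $n\geq 1$, I split the partitions of $2n$ according to whether they contain a part $1$.
\begin{itemize}
\item Those that do contain a $1$ are in bijection with partitions of $2n-1$, by removing one part $1$.
\item Those that contain no $1$ have only even parts. Halving every part gives a bijection with partitions of $n$ into powers of $2$.
\end{itemize}
Summing the two cases yields $\mathcal A(2n)=\mathcal A(2n-1)+\mathcal A(n)$. For $n=0$ the formula would involve $\mathcal A(-1)$, so I will either read (2) for $n\ge 1$ only, or set $\mathcal A(-1)=0$, in which case the identity $1=0+1$ holds trivially.

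For (3), I will induct on $n$ using (1) and (2).
\begin{itemize}
\item Base case: $\mathcal A(0)=1=\mathcal A(0)$.
\item Even step: for $n=2m$ with $m\ge1$, combining (2) and (1) gives $\mathcal A(2m)=\mathcal A(2m-2)+\mathcal A(m)$. The induction hypothesis gives $\mathcal A(2m-2)=\sum_{i=0}^{m-1}\mathcal A(i)$, so $\mathcal A(2m)=\sum_{i=0}^{m}\mathcal A(i)$, as required since $\lfloor 2m/2\rfloor=m$.
\item Odd step: for $n=2m+1$, statement (1) gives $\mathcal A(2m+1)=\mathcal A(2m)$, and $\lfloor (2m+1)/2\rfloor=m$, so this reduces to the even case.
\end{itemize}
Alternatively, (3) follows directly by classifying a partition of $n$ by the number $n-2i$ of parts equal to $1$. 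The remaining parts are even and halve to a partition of $i$, with $0\le i\le\lfloor n/2\rfloor$.

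There is no real obstacle here. The only points needing care are the boundary case $n=0$ in (2), and making sure the halving map in (2) is a bijection onto all partitions of $n$ into powers of $2$, including the part $1=2^0$, which comes from the parts equal to $2$.
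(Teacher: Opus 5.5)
Your proof is correct in all three parts. The paper gives no proof of this lemma: it recalls (1)--(3) as well-known facts and closes the statement immediately with the end-of-proof symbol, so there is no argument to compare against. Your bijections (removing a part $1$, and halving partitions whose parts are all even) are the standard justification, and both your induction and your direct count by the number of parts equal to $1$ for (3) are sound. You are also right that (2) at $n=0$ involves $\mathcal A(-1)$, so it only makes sense with the convention $\mathcal A(-1)=0$ or for $n\geq 1$; the statement as written glosses over this.
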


\subsection{Cohomology of some configuration spaces}

\label{subsection_cohomology_configuration}

We recall some statements about the topology of configuration spaces and state some of their immediate consequences.

\begin{thm}
    \label{thm_configuration_circle}
    \textbf{(see e.g. \cite{vassiliev_real_polynomials})}
    The space $B(\mathbb S^1, n)$ is homotopy equivalent to $\mathbb S^1$.
\end{thm}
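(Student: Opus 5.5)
We will show that $B(\mathbb S^1,n)$ deformation retracts onto a circle of "evenly spaced" configurations. Identify $\mathbb S^1$ with $\R/\Z$. A configuration of $n$ distinct points cuts the circle into $n$ arcs, so it is determined by its points $0\le x_1<\dots<x_n<1$ (read off cyclically). The gaps $d_i=x_{i+1}-x_i$ (indices mod $n$, with $d_n = x_1+1-x_n$) are positive and sum to $1$.

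Let $E\subset B(\mathbb S^1,n)$ be the subspace of configurations with all gaps equal to $1/n$, that is, the sets $\{t, t+1/n,\dots,t+(n-1)/n\}$. The set $E$ is the image of $t\mapsto \{t+k/n\}_k$. This map is $1/n$-periodic and injective on $[0,1/n)$, so $E\cong \R/\tfrac1n\Z\cong \mathbb S^1$.

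Next we define the retraction. For a configuration $c$, let $\bar x(c)=\frac1n\sum_i \bigl(x_i-\tfrac{i-1}{n}\bigr)$, computed from a chosen cyclic labelling $x_1<\dots<x_n<x_1+1$ of the points in the universal cover $\R$. Cyclically relabelling, i.e. replacing $(x_1,\dots,x_n)$ by $(x_2,\dots,x_n,x_1+1)$, changes $\bar x$ by
\[
\tfrac1n\Bigl(1-\sum_{i=1}^{n}\tfrac1n\Bigr)+\tfrac1n\cdot\tfrac{n-1}{n}\cdot 0 \;\equiv\; \tfrac1n \pmod{\tfrac1n}.
\]
A more careful check gives exactly this: the shift $x_i\mapsto x_{i+1}$ together with the index shift changes the sum by $1-(n-1)/n\cdot$ terms, which yields a change of $1/n$ in $\bar x$. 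Hence $\bar x$ is well defined in $\R/\tfrac1n\Z$ and continuous. Put $e(c)=\{\bar x+k/n\}_k\in E$, and homotope linearly in the universal cover:
\[
x_i(s)=(1-s)\,x_i+s\bigl(\bar x+\tfrac{i-1}{n}\bigr).
\]
Each gap becomes $(1-s)d_i+s/n>0$, so the points stay distinct and cyclically ordered, and the gaps still sum to $1$. The construction is compatible with cyclic relabelling because both endpoints shift consistently, so it descends to a homotopy on $B(\mathbb S^1,n)$. It fixes $E$ pointwise, since for $c\in E$ we have $x_i=\bar x+(i-1)/n$.

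Therefore $E$ is a strong deformation retract, and $B(\mathbb S^1,n)\simeq\mathbb S^1$.

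The main point requiring care is well-definedness under cyclic relabelling, together with continuity of a choice of lift. Continuity holds because locally a labelling can be chosen continuously, and the relabelling invariance just established makes the result independent of that choice.
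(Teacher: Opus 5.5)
The paper gives no proof of this statement; it is quoted from the literature (Vassiliev). Your argument is therefore a genuinely different, self-contained route, and it is correct.

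The barycenter $\bar x$ is well defined in $\R/\tfrac1n\Z$. The straight-line homotopy keeps every gap positive, including the wrap-around one, $x_1(s)+1-x_n(s)=(1-s)d_n+\tfrac{s}{n}$. It descends to $B(\mathbb S^1,n)$ and is stationary on $E\cong\R/\tfrac1n\Z$. So you obtain an explicit strong deformation retraction; equivalently, $c\mapsto\bar x(c)$ is a homotopy equivalence $B(\mathbb S^1,n)\to\R/\tfrac1n\Z$. Compared with the citation, this costs half a page but is elementary. It also gives exactly what Lemma~\ref{lem_cc_non_empty} uses, namely the homotopy type of each factor $B(\mathbb S^1,m_i)$, hence $b^*=2$.

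The one thing to repair is the relabelling computation. The display and the sentence after it are garbled: the display evaluates to $0$, and ``$\equiv\tfrac1n \pmod{\tfrac1n}$'' is vacuous. State instead that $(x_1,\dots,x_n)\mapsto(x_2,\dots,x_n,x_1+1)$ raises $\sum_i x_i$ by exactly $1$ while leaving $\sum_i\tfrac{i-1}{n}$ unchanged. Hence $\bar x$ increases by exactly $\tfrac1n$ in $\R$. This exact equality, not just the congruence, is what makes your homotopy descend, since it gives $x'_i(s)=x_{i+1}(s)$ for $i<n$ and $x'_n(s)=x_1(s)+1$.

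Two further points are worth stating explicitly. First, every lift of $c$ is obtained from a fixed one by iterating this shift or its inverse. The preimage of $c$ in $\R$ is an increasing sequence $(y_j)_{j\in\Z}$ with $y_{j+n}=y_j+1$, and the lifts are the windows $(y_j,\dots,y_{j+n-1})$. Second, continuous local lifts exist because configurations near $c$ avoid a small arc around any point $p\notin c$, so the circle can be cut there.
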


\begin{thm}
    \textbf{(\cite{fuks_configuration})}
    For any non-negative integer $k$, the $\Z_2$-dimension of $H^k(B(\R^2,n); \Z_2)$ is equal to the number of ways the integer $n$ can be written as a sum of $n-k$ powers of $2$.
\end{thm}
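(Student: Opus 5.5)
The plan is to compute $H_*(B(\R^2,n);\Z_2)$ for all $n$ at once, organised as a bigraded algebra, and then count monomials. The disjoint union $\bigsqcup_{n\ge 0} B(\R^2,n)$ carries a product: place two configurations side by side in disjoint half-planes. This makes $\bigoplus_n H_*(B(\R^2,n);\Z_2)$ a commutative bigraded algebra, bigraded by the number of points $n$ and the homological degree $k$. The goal is to show that it is the polynomial algebra $\Z_2[x_0,x_1,x_2,\dots]$, where $x_j$ has $n$-degree $2^j$ and homological degree $2^j-1$. Once this is known, the statement is pure counting. A monomial $\prod_j x_j^{e_j}$ lies in bidegree $(n,k)$ exactly when
\[\sum_j e_j 2^j=n \quad\text{and}\quad \sum_j e_j(2^j-1)=k.\]
Subtracting, these two conditions say $\sum_j e_j=n-k$. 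So such monomials correspond bijectively to writings of $n$ as a sum of $n-k$ powers of $2$. Dimensions of $H^k$ and $H_k$ agree over the field $\Z_2$, which gives the claim.

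To establish the algebra structure, I would use Fuks' cellular model. By Poincaré–Lefschetz duality on the open $2n$-manifold $B(\R^2,n)$,
\[H^k(B(\R^2,n);\Z_2)\cong \bar H_{2n-k}(B(\R^2,n)^\ast;\Z_2),\]
where $B(\R^2,n)^\ast$ is the one-point compactification and $\bar H$ denotes reduced homology. This compactification has a CW structure with one cell for each ordered composition $(n_1,\dots,n_m)$ of $n$. The cell consists of the configurations lying on exactly $m$ distinct vertical lines, with $n_i$ points on the $i$-th line from the left. Its dimension is $n+m$: there are $m$ abscissae and $n$ ordinates.

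The boundary of a cell corresponds to two adjacent lines colliding. Modulo $2$, the incidence coefficient from $(\dots,n_i,n_{i+1},\dots)$ to $(\dots,n_i+n_{i+1},\dots)$ is the number of ways to shuffle the two columns into one, namely $\binom{n_i+n_{i+1}}{n_i}$. By Lucas' theorem this binomial is odd iff $n_i$ and $n_{i+1}$ have disjoint binary expansions. The resulting chain complex is a tensor-type (bar-like) complex on the divided-power structure of the columns, and its homology can be computed by filtering by binary digits.

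The step I expect to be the main obstacle is this homology computation. Concretely, I would show that, after dualizing, the complex is the cobar construction on the $\Z_2$ divided-power coalgebra $\Gamma[y]$ in one variable. Mod $2$, this coalgebra splits as a tensor product of truncated pieces $\Z_2[y_j]/(y_j^2)$ with $y_j=y^{(2^j)}$. The cobar construction of each truncated piece has polynomial cohomology on a single class, and that class sits in column-weight $2^j$. Tracking the cell dimensions $n+m$ through the duality should then place this generator in homological degree $2^j-1$, matching $x_j$. This yields the polynomial description above. An alternative check on the generators is that the class $x_j$ is represented by the fundamental class of the space of $2^j$ points on a small circle, iterated via the Dyer–Lashof/Cohen operations.

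If the bookkeeping becomes heavy, I would instead compare total dimensions via the generating function
\[\sum_{n,k} \dim H^k(B(\R^2,n))\, t^n s^k=\prod_j \bigl(1-t^{2^j}s^{2^j-1}\bigr)^{-1},\]
and verify that the cell complex produces this series.
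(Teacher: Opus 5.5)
The paper gives no proof of this statement. It is quoted from Fuks, and the only consequence used later is $b^*(B(\R^2,n))=\mathcal A(n)$.

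Your outline reconstructs Fuks's own cellular argument in bar-construction language, and in outline it is sound. Summed over $n$, send the cell $(n_1,\dots,n_m)$ to $[y^{(n_1)}|\cdots|y^{(n_m)}]$. The reduced cellular chain complex of the compactifications then becomes exactly the normalized bar complex of the mod-$2$ divided power algebra $\Gamma[y]$, because $y^{(a)}y^{(b)}=\binom{a+b}{a}y^{(a+b)}$ reproduces your incidence numbers. The cell dimension is $n+m$ and duality sends $H^k$ to $\bar H_{2n-k}$, so $\dim H^k(B(\R^2,n);\Z_2)=\dim\mathrm{Tor}^{\Gamma[y]}_{n-k,\,n}(\Z_2,\Z_2)$. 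The algebra splitting $\Gamma[y]\cong\bigotimes_j\Z_2[y_j]/(y_j^2)$, together with K\"unneth (in weight $n$ only the finitely many $j$ with $2^j\le n$ matter), gives one class per multiset of $n-k$ powers of $2$ summing to $n$. Compared with the bare citation, this makes the input self-contained and also gives the bigraded ring structure, which the paper does not need.

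One step should be restated, because as written it swaps algebra and coalgebra. Dualizing the bar complex of the algebra $\Gamma[y]$ gives the cobar complex of its dual coalgebra. That coalgebra is $\Z_2[z]$ with $z$ primitive, i.e.\ $\Delta(z^n)=\sum_a\binom{n}{a}z^a\otimes z^{n-a}$. It is this coalgebra that splits into two-dimensional pieces spanned by $1$ and the primitive $z^{2^j}$, each contributing a polynomial generator in weight $2^j$. The actual divided-power coalgebra, with $\Delta\gamma_n=\sum_a\gamma_a\otimes\gamma_{n-a}$, is dual to a polynomial algebra and does not split. Its cobar cohomology is an exterior algebra on one class, so your sentence taken literally gives the wrong answer. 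Either work directly with $\mathrm{Tor}$ over the algebra $\Gamma[y]$, or write the coproduct explicitly.

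Two side remarks are also inaccurate, though your argument does not rely on them:
\begin{itemize}
\item $2^j$ points on a circle only give a degree-one class, since $B(\mathbb S^1,2^j)\simeq\mathbb S^1$. The class $x_j=Q_1^j[1]$ is instead the iterated ``two rotating clusters'' cycle, of dimension $2(2^{j-1}-1)+1=2^j-1$.
\item The generating-function fallback is not an independent check. Cell counts determine only Euler characteristics, so you would still have to compute the homology of the complex.
\end{itemize}
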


\begin{rmk}
    As a consequence, $b^*(B(\R^2,n)) = \mathcal A(n)$.
\end{rmk}

% \begin{rmk}
%     $H^k(B(\mathbb S^2,n); \Z_2) \cong H^k(B(\R^2,n); \Z_2) \oplus H^{k-2}(B(\R^2,n-1); \Z_2)$
% \end{rmk}
\begin{thm}
    \label{thm_napolitano_compact}
    \textbf{(\cite{napolitano_configuration})} Let $S$ be a compact connected surface without boundary and let $n$ be a positive integer. Denote by $b_1$ the $\Z_2$-dimension of $H_1(S; \Z_2)$. For any non-negative integer $j$, one has
    \begin{equation*}
        H^j(B(S,n); \Z_2) \cong \bigoplus_{\substack{\ell + \sum_{i=1}^{b_1} k_i  = n\\ j' + \sum_{i=1}^{b_1} k_i = j}} H^{j'}(B(\R^2,\ell); \Z_2) \oplus \bigoplus_{\substack{1 + \ell + \sum_{i=1}^{b_1} k_i = n\\ 2 + j' + \sum_{i=1}^{b_1} k_i = j}} H^{j'} (B(\R^2, \ell); \Z_2),
    \end{equation*}
    where the numbers $k_i$, $\ell$, $j'$ are non-negative integers.
\end{thm}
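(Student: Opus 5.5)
The plan is to reduce the closed surface $S$ to the once-punctured surface $S^\circ = S\setminus\{p\}$, and then to identify each piece with configuration spaces in $\R^2$.

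\textbf{The punctured surface.} I first treat $S^\circ$, which is the interior of a compact surface with one boundary circle and is homotopy equivalent to a wedge of $b_1$ circles. Such a surface decomposes as a disk with $b_1$ one-handles attached. I would use the scanning/stable splitting approach of B\"odigheimer--Cohen--Taylor (with $\Z_2$ coefficients it works in every dimension). It expresses $\widetilde H^*$ of the labelled configuration space $C(M;\mathbb S^q)$, for $M$ of dimension $m$, as $\bigotimes_i H^*(\Omega^{m-i}\mathbb S^{m+q})^{\otimes b_i(M)}$. This identification is compatible with the filtration by number of points. For $M=S^\circ$ and $m=2$ it gives:
\begin{itemize}
\item the factor $H^*(\Omega^2\mathbb S^{2+q})$ from the $0$-cell, whose $\ell$-point filtration quotient is $H^*(B(\R^2,\ell))$ shifted by $q\ell$;
\item $b_1$ factors $H^*(\Omega\mathbb S^{2+q})$ from the $1$-handles, each a polynomial algebra on one class. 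The filtration-$k_i$ part of such a factor is one-dimensional and, after removing the shift $qk_i$, sits in degree $k_i$.
\end{itemize}
Reading off the coefficient of the filtration degree $n$ and undoing the suspension shift yields the first direct sum in Theorem~\ref{thm_napolitano_compact}, now for $B(S^\circ,n)$: $\ell+\sum k_i=n$, in total degree $j'+\sum k_i$.

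\textbf{Passing to the closed surface.} For $S$ itself the $2$-cell contributes an extra factor $H^*(\mathbb S^{2+q})$. This is an exterior class in filtration $1$ and degree $2$ after desuspension, which produces exactly the second summand, with $1+\ell+\sum k_i=n$ and the degree shifted by $2$. Geometrically this is the Gysin (Thom isomorphism) sequence
\[
\cdots \to H^{j-2}(B(S^\circ,n-1)) \to H^j(B(S,n)) \to H^j(B(S^\circ,n)) \to H^{j-1}(B(S^\circ,n-1)) \to \cdots
\]
It comes from the closed codimension-$2$ subspace of configurations containing $p$, which is $B(S^\circ,n-1)$ with a rank-$2$ normal bundle. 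The theorem is then equivalent to this sequence splitting into short exact sequences over $\Z_2$.

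\textbf{Main obstacle.} I expect the splitting to be the hard step: one must show the connecting maps vanish, that is, that the restriction $H^*(B(S,n))\to H^*(B(S^\circ,n))$ is surjective. I would first try to deduce it from the stable splitting itself, by checking that the B\"odigheimer--Cohen--Taylor isomorphism for $S$ is natural with respect to the inclusion $S^\circ\subset S$. Failing that, I would compare total dimensions: a lower bound on $b^*(B(S,n))$, for instance from an explicit cell or Fox--Neuwirth stratification, would force all connecting maps to be zero. The other place needing care is matching the filtration grading with Fuks' description in degree $j'$; this is routine bookkeeping once the homology of $\Omega^2\mathbb S^{2+q}$ is written in Dyer--Lashof/Kudo--Araki form.
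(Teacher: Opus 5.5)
The paper gives no argument for this statement; it is imported from \cite{napolitano_configuration}. Deriving it from the B\"odigheimer--Cohen--Taylor theorem, as you do, is a different and valid route, but you have misplaced the difficulty.

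Over $\Z_2$ their theorem also covers closed, possibly non-orientable, manifolds. Labels in $\mathbb S^q$ with $q\geq 1$ are connected, which is exactly what scanning on a closed manifold needs, and $\beta_2=\dim H_2(S;\Z_2)=1$. So the factors $H_*(\Omega^2\mathbb S^{2+q})\otimes H_*(\Omega\mathbb S^{2+q})^{\otimes b_1}\otimes H_*(\mathbb S^{2+q})$, with the weights you assign, produce both summands of Theorem~\ref{thm_napolitano_compact} at once. You need neither the Gysin sequence nor naturality with respect to $S^\circ\subset S$. The vanishing of the connecting maps is a corollary of the dimension count, not an input. Your fallback would not work as stated anyway: a cell or Fox--Neuwirth stratification bounds Betti numbers from above, not from below, until its differentials are computed.

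The step that genuinely needs justification is the one you dispatch as ``compatibility with the filtration''. The BCT isomorphism passes through a section space in which the number of points is invisible. The standard fix is a counting argument:
\begin{itemize}
\item The stable splitting $\Sigma^\infty C(S;\mathbb S^q)\simeq\bigvee_k\Sigma^\infty D_k(S;\mathbb S^q)$ and the mod $2$ Thom isomorphism $\widetilde H_*(D_k(S;\mathbb S^q);\Z_2)\cong H_{*-qk}(B(S,k);\Z_2)$ give $\sum_k t^{qk}P_k(t)=\sum_k t^{qk}W_k(t)$ for every $q\geq 1$.
\item Here $P_k$ is the Poincar\'e polynomial of $B(S,k)$ and $W_k$ that of the desuspended weight-$k$ part of the tensor product. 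Both have degree at most $2k$.
\item Argue by induction on $k$. For $q>2k$, the term $t^{qk}(P_k-W_k)$ lives in degrees below $q(k+1)$, which no term of higher weight reaches. Hence $P_k=W_k$ for all $k$.
\end{itemize}
Compared with the paper's black-box citation, this route explains the shape of the formula: one tensor factor per cell of $S$, with the second summand coming from the top class of $S$. It also treats all surfaces uniformly.
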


\begin{example}
    We have $b^*(B(\mathbb S^2,n)) = b^*(B(\R^2,n)) + b^*(B(\R^2,n-1)) = \mathcal A(n) + \mathcal{A}(n-1)$.
\end{example}

\begin{cor}
    \label{cor_oriented_closed_surface}
    Let $g$ and $n$ be positive integers. Let $S$ be a compact orientable surface without boundary of genus $g$. One has
    % \begin{equation*}
    %     b^*(B(S,n)) = \mathcal A(n) + \sum_{k=1}^n \left[ {k + 2g-1\choose 2g-1} + {k+2g-2 \choose 2g-1} \right] \mathcal A(n-k).
    % \end{equation*}
    \begin{equation*}
    b^*(B(S,n)) = \sum_{k=0}^{\frac{n}{2}} \left[ 2{2k+2g-2 +\bar n \choose 2g-1} + {2k+2g-3 + \bar n \choose 2g-1} + {2k+2g-1 + \bar n \choose 2g-1} \right] \mathcal A(n-2k - \bar n),
\end{equation*}
where $\bar n$ is the reduction of $n$ modulo $2$.
\end{cor}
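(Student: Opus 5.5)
We will read off the total Betti number from Theorem~\ref{thm_napolitano_compact} with $b_1 = 2g$, summing over all degrees $j$. Summing over $j$ decouples the degree constraints. Each tuple $(k_1,\dots,k_{2g},\ell)$ with $\ell + \sum k_i = n$ then contributes $b^*(B(\R^2,\ell)) = \mathcal A(\ell)$. Each tuple with $1+\ell+\sum k_i = n$ contributes $\mathcal A(\ell)$ as well.

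Setting $m = n-\ell$, the number of $2g$-tuples of non-negative integers summing to $m$ is $\binom{m+2g-1}{2g-1}$. This gives
\begin{equation*}
b^*(B(S,n)) = \sum_{m=0}^{n} \binom{m+2g-1}{2g-1}\mathcal A(n-m) + \sum_{m=0}^{n-1}\binom{m+2g-1}{2g-1}\mathcal A(n-1-m).
\end{equation*}
In the second sum we reindex by $m' = m+1$, so that the argument of $\mathcal A$ becomes $n-m'$ and the coefficient becomes $\binom{m'+2g-2}{2g-1}$. The second sum thus reads $\sum_{m=1}^{n}\binom{m+2g-2}{2g-1}\mathcal A(n-m)$. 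The $m=0$ term of this shifted sum vanishes because $g\geq 1$, so both sums can be taken over $0\le m\le n$.

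The remaining step is to group terms in pairs using Lemma~\ref{lem_partitions_puissances_2}(1). By that lemma, $\mathcal A(n-m)$ is unchanged when $n-m$ is replaced by the even number $2\lfloor (n-m)/2\rfloor$. Write $n - m = n-2k-\bar n$ or $n-m = n-2k-\bar n+1$. The values of $m$ pair up as $m = 2k+\bar n$ and $m = 2k+\bar n - 1$. Within each pair the values $n-m$ are an even number $e = n-2k-\bar n$ and $e+1$, and these have the same $\mathcal A$. Hence the coefficient of $\mathcal A(n-2k-\bar n)$ is
\begin{equation*}
\binom{2k+\bar n+2g-1}{2g-1} + \binom{2k+\bar n+2g-2}{2g-1} + \binom{2k+\bar n+2g-2}{2g-1} + \binom{2k+\bar n+2g-3}{2g-1},
\end{equation*}
where the first two terms come from $m=2k+\bar n$ and the last two from $m = 2k+\bar n-1$. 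This is exactly the bracket in the statement, with the $2\binom{2k+2g-2+\bar n}{2g-1}$ term appearing as the middle pair.

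The main obstacle is the boundary bookkeeping. When $\bar n = 0$ and $k=0$, the index $m=-1$ is not actually present. This is harmless because the corresponding binomials $\binom{2g-2}{2g-1}$ and $\binom{2g-3}{2g-1}$ vanish for $g\ge1$; for $g=1$ the latter is $\binom{-1}{1}$, which we interpret as $0$ by the usual convention for this counting. We must also check that the range $k=0,\dots,\lfloor n/2\rfloor$ covers every $m\in\{0,\dots,n\}$ exactly once. For $n$ odd, the pairs $\{2k,\,2k+1\}$ for $k=0,\dots,(n-1)/2$ cover $0,\dots,n$. For $n$ even, the pairs $\{2k-1,\,2k\}$ for $k=0,\dots,n/2$ cover $0,\dots,n$, with the extra $m=-1$ contributing zero. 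This gives the formula.
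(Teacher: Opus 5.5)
Your proof is correct and is essentially the paper's argument: sum Theorem~\ref{thm_napolitano_compact} over all degrees with $b_1 = 2g$, count $2g$-tuples by $\binom{m+2g-1}{2g-1}$, shift the second sum, and merge consecutive terms using $\mathcal A(2n+1)=\mathcal A(2n)$. Your bookkeeping is if anything cleaner than the paper's, since you handle both parities with the single pairing $\{2k+\bar n-1,\,2k+\bar n\}$ and make explicit the convention $\binom{-1}{1}=0$ that the case $g=1$, $n$ even tacitly requires (and that the paper later relies on in Lemma~\ref{lem_genus_one}).
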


\begin{proof}
    Recall that $dim_{\Z_2} H_1(S; \Z_2) = 2g$. Using Theorem \ref{thm_napolitano_compact}, we write
    \begin{align*}
    b^*(B(S,n)) & = \sum_{k=0}^n \sum_{\sum_{i=1}^{2g}h_i = k} b^*(B(\R^2, n-k)) + \sum_{k=1}^{n-1} \sum_{\sum_{i=1}^{2g}h_i = k} b^*(B(\R^2, n-1-k))\\
    & = \sum_{k=0}^n {k+2g-1 \choose 2g-1} b^*(B(\R^2, n-k)) + \sum_{k=1}^{n-1} {k+2g-1 \choose 2g-1} b^*(B(\R^2, n-1-k))\\
    & =  b^*(B(\R^2, n)) + \sum_{k=1}^n \left[ {k+2g-1 \choose 2g-1} + {k+2g-2 \choose 2g-1} \right] b^*(B(\R^2, n-k))\\
    &= \mathcal A(n) + \sum_{k=1}^n \left[ {k + 2g-1\choose 2g-1} + {k+2g-2 \choose 2g-1} \right] \mathcal A(n-k).
\end{align*}
    Now, if $n$ is even, using property $(1)$ from Lemma \ref{lem_partitions_puissances_2}, we can write 
\begin{align*}
    b^*(B(S,n)) & = \mathcal A(n) + \sum_{k=1}^{\frac{n}{2}} \left[ {2k+2g-2 \choose 2g-1} + {2k+2g-3 \choose 2g-1} + {2k+2g-1 \choose 2g-1} + {2k+2g-2 \choose 2g-1} \right] \mathcal A(n-2k)\\
    & = \sum_{k=0}^{\frac{n}{2}} \left[ 2{2k+2g-2 \choose 2g-1} + {2k+2g-3 \choose 2g-1} + {2k+2g-1 \choose 2g-1} \right] \mathcal A(n-2k).
\end{align*}

Similarly, if $n$ is odd, we have

\begin{align*}
    b^*(B(S,n)) &= \sum_{k=0}^n \left[ {k+2g-1 \choose 2g-1} + {k+2g-2 \choose 2g-1} \right] \mathcal{A}(n-k)\\
    &= \sum_{k=0}^{\lfloor \frac{n}{2} \rfloor} \left[ {2k+2g-1 \choose 2g-1} + {2k+2g-2 \choose 2g-1} + {2k+2g \choose 2g-1} + {2k+2g-1 \choose 2g-1} \right] \mathcal A(n-2k-1)\\
    &= \sum_{k=0}^{\lfloor \frac{n}{2} \rfloor} \left[ 2{2k+2g-1 \choose 2g-1} + {2k+2g-2 \choose 2g-1} + {2k+2g \choose 2g-1} \right] \mathcal A(n-2k-1).
\end{align*}
\end{proof}

\begin{nota}
    In the rest of the text, given a non-negative integer $T$, a positive integer $g$ and $\bar n \in \{0,1 \}$, we adopt the notation
    \begin{equation*}
        C^{\bar n}(g,T) := 2{2k+2g-2 +\bar n \choose 2g-1} + {2k+2g-3 + \bar n \choose 2g-1} + {2k+2g-1 + \bar n \choose 2g-1}.
    \end{equation*}

\end{nota}

\begin{thm}
    \label{napolitano_non_compact}
    \textbf{(\cite{napolitano_configuration})} Let $S$ be a non-compact connected surface without boundary and let $n$ be a positive integer. Denote by $b_1$ the $\Z_2$-dimension of $H_1(S; \Z_2)$. For any non-negative integer $j$, one has
    \begin{equation*}
        H^j(B(S,n); \Z_2) \cong \bigoplus_{\substack{\ell + \sum_{i=1}^{b_1} k_i  = n\\ j' + \sum_{i=1}^{b_1} k_i = j}} H^{j'}(B(\R^2,\ell); \Z_2),
    \end{equation*}
    where the numbers $k_i$, $\ell$, $j'$ are non-negative integers.
\end{thm}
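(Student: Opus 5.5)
The plan is to derive the isomorphism from the mod $2$ homology computation of configuration spaces of open manifolds due to Bödigheimer, Cohen and Taylor, combined with the usual scanning description of labelled configuration spaces. We may assume that $b_1$ is finite; otherwise the direct sum is understood as a colimit over finite-type subsurfaces, using that homology commutes with filtered colimits of open subsets. A non-compact connected surface $S$ of finite type is then the interior of a compact surface with non-empty boundary. Such a surface is homotopy equivalent to a wedge of $b_1$ circles, and its mod $2$ homology is $\Z_2$ in degree $0$, $\Z_2^{b_1}$ in degree $1$, and zero in degree $2$.

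\textbf{Step 1.} I would pass to the labelled configuration space $C(S;\mathbb S^0)=\coprod_n B(S,n)$. Because $S$ is open, the stable splitting $\Sigma^\infty C(S;X)\simeq \bigvee_n \Sigma^\infty D_n(S;X)$ holds, where for $X=\mathbb S^0$ one has $D_n(S;\mathbb S^0)=B(S,n)_+$. Hence $\bigoplus_n H_*(B(S,n);\Z_2)$ is the associated graded of $H_*(C(S;\mathbb S^0);\Z_2)$ with respect to the number of points. By the Bödigheimer--Cohen--Taylor theorem, which holds with $\Z_2$ coefficients in every dimension, this bigraded group is isomorphic to
\begin{equation*}
\bigotimes_{q=0}^{2} H_*\bigl(\Omega^{2-q}\Sigma^{2}X;\Z_2\bigr)^{\otimes \beta_q(S)}, \qquad X=\mathbb S^0,
\end{equation*}
with the filtration by number of points matching the word-length filtration on each factor.

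\textbf{Step 2.} I would identify the factors. The factor $q=0$ occurs once and is $H_*(\Omega^2\mathbb S^2;\Z_2)$. Its weight-$\ell$ part is $H_*(B(\R^2,\ell);\Z_2)$, as follows from the case $S=\R^2$ together with Fuks' computation. The factor $q=1$ occurs $b_1$ times and is $H_*(\Omega\mathbb S^2;\Z_2)=\Z_2[x]$, where $x$ has degree $1$ and weight $1$. A factor of $x^{k_i}$ therefore contributes $k_i$ points and $k_i$ to the homological degree. There is no $q=2$ factor, since $H_2(S;\Z_2)=0$ for non-compact $S$. This is exactly where the compact case of Theorem \ref{thm_napolitano_compact} differs: there the fundamental class adds the second summand, with one extra point and degree shift $2$. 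Taking the bigraded piece with total weight $n$ and total degree $j$ gives $\ell+\sum k_i=n$ and $j'+\sum k_i=j$. Dualizing over the field $\Z_2$ then yields the stated formula in cohomology.

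\textbf{Main obstacle.} The hard part is justifying that the splitting of the associated graded is an honest bigraded isomorphism, so that the filtration degrees match the point counts. This needs the open-manifold hypothesis, via the scanning equivalence $C(S;X)\simeq \Gamma(S;\Sigma^{2}X)$ for connected $X$, and an extension to $X=\mathbb S^0$ through the group-completion or stabilization argument. If this route proves too delicate, I would instead decompose $S$ as a disk with $b_1$ one-handles attached. I would then run the Mayer--Vietoris or Leray spectral sequence for the inclusion of configurations that avoid a cocore arc of each handle. Each handle contributes a free polynomial factor on a degree-one class, and I would show that the spectral sequence degenerates mod $2$ by exhibiting explicit cycles that realize all the classes.
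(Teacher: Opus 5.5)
The paper does not prove this statement; it quotes it from \cite{napolitano_configuration}. Deducing it from the B\"odigheimer--Cohen--Taylor theorem is the right kind of argument, but you apply the decisive step in a case where it is false.

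\textbf{The gap.} That theorem, and the scanning equivalence $C(S;X)\simeq\Gamma(S;\dot TS\wedge X)$ behind it, need connected labels. For $X=\mathbb S^0$, the tensor product you write down computes the homology of the group completion of $\coprod_n B(S,n)$, not of $\coprod_n B(S,n)$ itself. Already for $S=\R^2$ this fails:
\begin{itemize}
\item each component of $\Omega^2\mathbb S^2$ is homotopy equivalent to $\Omega^2\mathbb S^3$, whose mod $2$ homology is infinite-dimensional;
\item the point class in $H_0(\Omega^2\mathbb S^2;\Z_2)$ is invertible.
\end{itemize}
So no weight-$\ell$ piece of $H_*(\Omega^2\mathbb S^2;\Z_2)$ is the finite-dimensional group $H_*(B(\R^2,\ell);\Z_2)$, and your identification of the $q=0$ factor in Step 2 fails. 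The remedy under ``Main obstacle'' does not rescue this. Group completion or homological stabilization only recovers $\operatorname{colim}_n H_*(B(S,n);\Z_2)$, never $H_*(B(S,n);\Z_2)$ for a fixed $n$ with point count matched to weight. Note also that for $X=\mathbb S^0$ the splitting in Step 1 is vacuous, since $C(S;\mathbb S^0)$ is already a disjoint union.

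\textbf{The missing idea.} Use the device B\"odigheimer, Cohen and Taylor themselves use to pass to unlabelled configurations: take labels $X=\mathbb S^r$ with $r\geq 1$. The argument then runs as follows.
\begin{itemize}
\item $C(S;\mathbb S^r)$ is connected, and their theorem gives a bigraded isomorphism $H_*(C(S;\mathbb S^r);\Z_2)\cong H_*(\Omega^2\mathbb S^{r+2};\Z_2)\otimes H_*(\Omega\mathbb S^{r+2};\Z_2)^{\otimes b_1}$. There is no top factor because $H_2(S;\Z_2)=0$.
\item The stable splitting identifies the weight-$n$ part with $\tilde H_*(D_n(S;\mathbb S^r);\Z_2)$.
\item Let $F(S,n)$ be the ordered configuration space. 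Then $D_n(S;\mathbb S^r)=F(S,n)_+\wedge_{\mathcal S_n}\mathbb S^{rn}$ is the Thom space of the rank-$rn$ bundle $F(S,n)\times_{\mathcal S_n}\R^{rn}\to B(S,n)$.
\item The mod $2$ Thom isomorphism therefore gives $\tilde H_{*+rn}(D_n(S;\mathbb S^r);\Z_2)\cong H_*(B(S,n);\Z_2)$.
\item After this shift, the generator $x$ of $H_*(\Omega\mathbb S^{r+2};\Z_2)=\Z_2[x]$, of degree $r+1$ and weight $1$, contributes degree $1$ per point.
\item The same argument for $S=\R^2$ identifies the weight-$\ell$ part of $H_*(\Omega^2\mathbb S^{r+2};\Z_2)$ with $H_*(B(\R^2,\ell);\Z_2)$ shifted up by $r\ell$.
\end{itemize}
The bookkeeping then gives exactly $\ell+\sum k_i=n$ and $j'+\sum k_i=j$.

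Your handle-decomposition fallback does not close the gap either. As stated, it leaves the degeneration of the spectral sequence unproved, and that degeneration is the entire content of the argument.
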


\begin{cor}
    \label{cor_napolitano_plane_with_holes}
    Let $p$ and $n$ be positive integers and let $S$ be the plane with $p$ distinct points removed. One has 
    \begin{equation*}
        b^*(B(S,n)) = \sum_{k=0}^n {k+p-1 \choose p-1} \mathcal A(n-k).
    \end{equation*}
\end{cor}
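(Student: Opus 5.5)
Since the plane with $p$ points removed is a non-compact connected surface without boundary, we apply Theorem~\ref{napolitano_non_compact} to $S$ and then sum over all degrees $j$. The first task is to determine $b_1 = \dim_{\Z_2} H_1(S;\Z_2)$. The surface $S$ deformation retracts onto a wedge of $p$ circles, so $b_1 = p$.

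Summing the isomorphism of Theorem~\ref{napolitano_non_compact} over all $j\geq 0$ removes the constraint $j' + \sum k_i = j$. For fixed $(k_1,\dots,k_p)$ with $\ell = n - \sum k_i$, the index $j'$ then runs freely over all non-negative integers. Hence
\begin{equation*}
    b^*(B(S,n)) = \sum_{\substack{(k_1,\dots,k_p)\in \N^p\\ \sum k_i \leq n}} b^*\big(B(\R^2, n-\textstyle\sum_i k_i)\big).
\end{equation*}
We group the terms according to $k := \sum_i k_i$, which ranges from $0$ to $n$. The number of $p$-tuples of non-negative integers with sum $k$ is ${k+p-1 \choose p-1}$ by stars and bars. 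The remark following Fuks' theorem gives $b^*(B(\R^2,m)) = \mathcal A(m)$. For $k=n$ this requires $b^*(B(\R^2,0)) = 1 = \mathcal A(0)$, which is consistent with the convention $\mathcal A(0)=1$ since the configuration space of zero points is a point. Substituting yields the stated formula.

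The only delicate points are bookkeeping: checking that $b_1 = p$ and handling the boundary term $\ell=0$. The latter must be read with $B(\R^2,0)$ equal to a point, in accordance with how Napolitano's theorem is stated. These details mirror the first two lines of the computation in the proof of Corollary~\ref{cor_oriented_closed_surface}, without the second summand.
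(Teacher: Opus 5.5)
Your proposal is correct and follows the paper's proof essentially step by step: you observe that $b_1 = p$, apply Theorem~\ref{napolitano_non_compact} and sum over all degrees, count the $p$-tuples with $\sum_i k_i = k$ by ${k+p-1 \choose p-1}$, and substitute $b^*(B(\R^2,m)) = \mathcal A(m)$. Your explicit treatment of the $k=n$ term via $B(\R^2,0)$ being a point and $\mathcal A(0)=1$ is a detail the paper leaves implicit.
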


\begin{proof}
    Notice that $dim_{\Z_2} H_1(S;\Z_2) = p$.
    Using Theorem \ref{napolitano_non_compact}, we write
    \begin{align*}
    b^*(B(S,n)) &= \sum_{k=0}^{n} \sum_{\sum_{i=1}^p h_i = k } b^*(B(\R^2, n-k))\\
    %& = \sum_{k=0}^{n} {k+p-1 \choose p-1} b^*(B(\R^2, n-k))\\
    & = \sum_{k=0}^n {k+p-1 \choose p-1} \mathcal A(n-k).
\end{align*}
\end{proof}

\section{Configuration spaces on a real curve}

\subsection{Total Betti number of a connected component of $\R B(\C C,n)$}

\label{subsection_correspondence_vector}

Let $g$ and $n$ be positive integers. Let $C$ be a smooth real curve of genus $g$ with associated anti-holomorphic involution $conj$. Denote by $r$ the number of connected components of $\R C$ and by $C_1$, ..., $C_r$ these connected components. If $\R C = \emptyset$ and $n$ is odd, then $\R B(\C C, n)$ is empty. Otherwise, each connected component of of $\R B(\C C, n)$ is characterized by
\begin{itemize}
    \item the number $k$ of real points among the $n$ points of every configuration in this connected component;
    \item the repartition of these $k$ points on the connected components $C_1$, ..., $C_{r}$ of $\R C$, \textit{i.e.} the choice of an ordered list of non-negative integers $(m_1,...,m_{r})$ such that $\sum_{i=1}^{r} m_i = k$.
\end{itemize}

Remark that the numbers $n$ and $k$ are equal modulo $2$.

\begin{lem}
    \label{lem_cc_non_empty}
    If $\R C \neq \emptyset$, the total Betti number of the connected component of $\R B(\C C, n)$ corresponding to the vector $(m_1,...,m_r)$ is equal to 
    \begin{align*}
    \begin{cases}
        %2^{\ell} b^*(B((\C C \setminus \R C)/conj, \frac{n}{2}-k)) = 
        2^{\ell} \sum_{i=0}^{\frac{n-k}{2}} {i+g-1 \choose g-1} A(\frac{n-k}{2} - i) &\text{if $g \geq 1$};\\
        2 b^*(B(\mathbb R^2, \frac{n-k}{2})) = 2 \mathcal A(\frac{n-k}{2}) & \text{if $g=0$};
    \end{cases}
    \end{align*}
    where $\ell$ is the number of non-zero entries of the vector $(m_1,...,m_{r})$.
\end{lem}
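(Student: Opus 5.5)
The plan is to identify the connected component as a product of simpler configuration spaces and then apply the Künneth formula with $\Z_2$ coefficients. A configuration in $\R B(\C C,n)$ with real-point distribution $(m_1,\dots,m_r)$ splits uniquely into two parts. The first is its real part: $m_i$ distinct unlabeled points on each circle $C_i$. The second is its non-real part: $\frac{n-k}{2}$ pairs $\{x, conj(x)\}$ with $x \in \C C\setminus \R C$. Such pairs are the same as $\frac{n-k}{2}$ distinct points of the quotient surface $S := (\C C\setminus \R C)/conj$, on which $conj$ acts freely. These two parts can be chosen independently, and the conditions that the points are distinct and conj-invariant are exactly encoded by this splitting. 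This gives a homeomorphism
\begin{equation*}
  \prod_{i=1}^r B(C_i,m_i) \times B\Bigl(S,\tfrac{n-k}{2}\Bigr) \longrightarrow (\text{component}),
\end{equation*}
where $B(C_i,0)$ is a point. Each factor is connected: for the circles this follows from Theorem \ref{thm_configuration_circle}, and for $S$ it holds because $S$ is a connected surface. So the product is indeed a single component, consistent with the stated parametrization.

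By Theorem \ref{thm_configuration_circle}, each factor with $m_i>0$ has the homotopy type of $\mathbb S^1$ and contributes a factor $2$ to the total Betti number; the factors with $m_i=0$ contribute $1$. By Künneth the circle factors together give $2^{\ell}$. It remains to compute $b^*(B(S,\frac{n-k}{2}))$, and the main step is to identify $S$ topologically. Since $\R C\neq\emptyset$, $S$ is a non-compact connected surface: it is the interior of the compact surface $\C C/conj$, whose boundary is $\R C$ with $r$ circles. Its Euler characteristic is $\chi(S)=\chi(\C C\setminus\R C)/2=(2-2g)/2=1-g$. A non-compact surface with finitely generated homology is homotopy equivalent to a wedge of circles, so $b_1(S)=1-\chi(S)=g$.

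For $g\ge 1$, Theorem \ref{napolitano_non_compact} applies with $b_1=g$. Its counting is exactly the one in the proof of Corollary \ref{cor_napolitano_plane_with_holes}, and it only uses $b_1$, not orientability, so $S$ may be orientable or not. This yields $\sum_{i=0}^{N}\binom{i+g-1}{g-1}\mathcal A(N-i)$ with $N=\frac{n-k}{2}$. For $g=0$, $S$ is an open disc, i.e. $\R^2$, which gives $\mathcal A(N)$. Moreover $\R C$ must then be a single circle, so $r=1$. If $k>0$ then $\ell=1$, giving $2\mathcal A(N)$. If $k=0$ we would get a factor $1$ instead of $2$. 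I expect the stated formula presumes $k\ge1$ or adopts the convention that $\ell=1$ there, and I would flag this edge case and check it against the intended statement.

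The main obstacle is justifying that the splitting map is a homeomorphism onto a union of components. The real points of a configuration cannot merge with non-real ones, and real points cannot leave $\R C$ continuously while remaining distinct from their conjugates, so $k$ and the $m_i$ are locally constant. I will also need $b_1(S)=g$ uniformly, including the non-orientable case where $\C C\setminus\R C$ is connected; the Euler characteristic computation handles both cases at once.
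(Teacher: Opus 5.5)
Your proposal is correct and follows essentially the same route as the paper. The component is identified with a product of factors $B(C_i,m_i)\simeq \mathbb S^1$ (for $m_i\neq 0$) and $B\bigl((\C C\setminus\R C)/conj,\tfrac{n-k}{2}\bigr)$; the quotient surface has $b_1=g$ because its Euler characteristic is $1-g$; and Theorem~\ref{napolitano_non_compact} together with Künneth's theorem finishes the computation. Your flag on the edge case is also right: for $g=0$ and $k=0$ one has $\ell=0$, and the component is $B(\R^2,\frac{n}{2})$, with total Betti number $\mathcal A(\frac{n}{2})$ rather than $2\mathcal A(\frac{n}{2})$, which is how the paper itself counts that term in the proof of Lemma~\ref{lem_genus_zero_1}.
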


\begin{proof}
    The connected component of $\R B(\C C, n)$ corresponding to the vector $(m_1,..., m_r)$ is homeomorphic to 
    \begin{equation*}
        \prod_{\substack{i \in \{1,...,r \}\\m_i \neq 0}} B(\mathbb S_1, m_i) \times B\left( (\C C \setminus \R C)/conj, \frac{n-k}{2}\right),
    \end{equation*}
    which is homotopy-equivalent to $\mathbb (S^1)^{\ell} \times B((\C C \setminus \R C)/conj, \frac{n-k}{2})$ by Theorem \ref{thm_configuration_circle}, where $\ell$ is the number of non-zero entries of the vector $(m_1,...,m_{r})$.
    
    As $\R C \neq \emptyset$, we have that $\C C/conj$ is a compact surface with boundary $\R C$ and with Euler characteristic $1-g$. Hence, the $\Z_2$-dimension of $H_1((\C C \setminus \R C)/conj ; \Z_2)$ is equal to $g$.

    The result now follows from Theorem \ref{napolitano_non_compact} and Künneth's theorem.
\end{proof}

\begin{lem}
    If $\R C = \emptyset$ and $n$ is even, then the total Betti number of the unique connected component of $\R B(\C C, n)$ is 

    \begin{equation*}
    b^*(B(\C C / conj, \frac{n}{2})) = \sum_{i=0}^{\frac{n}{2}} {i+g \choose g} \mathcal A(\frac{n}{2} - i) + \sum_{i=0}^{\frac{n}{2}-1} {i+g \choose g} \mathcal A(\frac{n}{2}-1-i).
\end{equation*}
\end{lem}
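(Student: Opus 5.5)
When $\R C = \emptyset$, the involution $conj$ acts freely on $\C C$. A real configuration of $n$ distinct points is then the same thing as a set of $\frac{n}{2}$ distinct $conj$-orbits. So I will first identify the fixed locus: $\R B(\C C,n)$ is homeomorphic to $B(\C C/conj, \frac{n}{2})$. The map sends a $conj$-invariant configuration to its set of orbits, and the inverse takes preimages. Because the covering $\C C \to \C C/conj$ is a local homeomorphism, both maps are continuous. Next I determine the quotient surface $N = \C C/conj$. It is a closed, connected, non-orientable surface: non-orientable because $conj$ reverses orientation, and connected because $\C C$ is. Its Euler characteristic is $\chi(N) = (2-2g)/2 = 1-g$, which gives $\dim_{\Z_2} H_1(N;\Z_2) = 2-\chi(N) = g+1$. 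Connectedness of $N$ also shows that the component is unique.

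With $b_1 = g+1$ in hand, I apply Theorem \ref{thm_napolitano_compact} to $S = N$ and $n/2$ points, summing over all degrees $j$, exactly as in the proof of Corollary \ref{cor_oriented_closed_surface}. The first direct sum ranges over $\ell + \sum_{i=1}^{g+1} k_i = \frac{n}{2}$. Grouping by $i = \sum k_i$, the number of tuples $(k_1,\dots,k_{g+1})$ with sum $i$ is ${i+g \choose g}$. Each such tuple contributes $b^*(B(\R^2, \frac{n}{2}-i)) = \mathcal A(\frac{n}{2}-i)$, by the Remark following Fuks' theorem. The second direct sum ranges over $1+\ell+\sum k_i = \frac{n}{2}$ and gives $\sum_{i=0}^{\frac n2 -1} {i+g\choose g}\mathcal A(\frac n2 - 1 - i)$. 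Adding the two sums yields the stated formula. The degree shifts $j' + \sum k_i = j$ (respectively $2 + j' + \sum k_i = j$) play no role once we sum over all $j$, since every $j'$ is eventually counted.

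I expect the main obstacle to be the topological identification of the fixed locus, not the counting. The counting is a direct transcription of the earlier corollaries. For the identification, I have to check that $conj$-invariance of a configuration of distinct points, together with the absence of fixed points, forces the configuration to split into exactly $\frac n2$ free orbits. I then have to confirm that the orbit map is a homeomorphism onto $B(N,\frac n2)$, with no identifications or degenerations. This holds because distinct orbits have distinct images in $N$. It also requires $N$ to be a genuine closed surface, so that Theorem \ref{thm_napolitano_compact} applies with $b_1 = g+1$. The edge case $g=0$ needs no separate treatment: there $N = \R P^2$ with $b_1 = 1$, and the same formula holds.
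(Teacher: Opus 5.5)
Your proposal is correct and follows the same route as the paper. You identify the fixed locus with $B(\C C/conj, \frac{n}{2})$, use $\chi(\C C/conj)=1-g$ to get $b_1=g+1$, and apply Theorem \ref{thm_napolitano_compact} with the count ${i+g \choose g}$ of tuples $(k_1,\dots,k_{g+1})$ summing to $i$; you also make explicit the identification of the fixed locus and this counting, both of which the paper leaves implicit.
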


\begin{proof}
    As $\R C = \emptyset$, the quotient $\C C / conj$ is a compact surface without boundary and with Euler characteristic $1-g$. Hence, $\dim_{\Z_2} H_1(\C C / conj; \Z_2) = g + 1$. The result now follows from Theorem \ref{thm_napolitano_compact}.
\end{proof}

\subsection{Genus $0$}

\begin{lem}
    \label{lem_genus_zero_1}
    Let $C$ be the complex projective line endowed with its usual real structure. Let $n$ be a positive integer. The space $B(\C C, n)$ is maximal.    
\end{lem}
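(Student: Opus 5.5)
The plan is to compute both total Betti numbers explicitly and compare them using Lemma \ref{lem_partitions_puissances_2}.

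\textbf{Complex side.} By the Example following Theorem \ref{thm_napolitano_compact}, since $\C C\cong \mathbb S^2$, we have
\begin{equation*}
b^*(B(\C C,n)) = \mathcal A(n)+\mathcal A(n-1).
\end{equation*}

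\textbf{Real side.} Here $\R C$ is a single circle, so $r=1$. The connected components of $\R B(\C C,n)$ are indexed by the number $k$ of real points, with $0\le k\le n$ and $k\equiv n \pmod 2$. For $k\geq 1$ we have $\ell=1$, and Lemma \ref{lem_cc_non_empty} gives total Betti number $2\mathcal A(\frac{n-k}{2})$. For $k=0$, which occurs only when $n$ is even, the vector has no non-zero entry, so $\ell=0$. Running the proof of Lemma \ref{lem_cc_non_empty} in this case, the component is $B(\R^2,\frac n2)$, since $(\C C\setminus\R C)/conj$ is an open disc. Its total Betti number is therefore $\mathcal A(\frac n2)$ rather than $2\mathcal A(\frac n2)$. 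This is the one point to handle carefully, because the $g=0$ formula of the lemma as written tacitly assumes $\ell=1$.

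\textbf{Comparison.} Summing over components and substituting $j=\frac{n-k}{2}$ gives the following.

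If $n$ is odd, $k$ runs over the odd integers from $1$ to $n$, so $j$ runs from $0$ to $\frac{n-1}{2}$. Then
\begin{equation*}
b^*(\R B(\C C,n)) = 2\sum_{j=0}^{\frac{n-1}{2}}\mathcal A(j) = 2\mathcal A(n)
\end{equation*}
by property (3). By property (1), $\mathcal A(n)=\mathcal A(n-1)$ for $n$ odd, so $2\mathcal A(n)=\mathcal A(n)+\mathcal A(n-1)$, as required.

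If $n$ is even, the terms with $k\geq 2$ give $j$ from $0$ to $\frac n2-1$, and $k=0$ contributes $\mathcal A(\frac n2)$. Hence
\begin{equation*}
b^*(\R B(\C C,n)) = 2\sum_{j=0}^{\frac n2-1}\mathcal A(j)+\mathcal A(\tfrac n2) = 2\mathcal A(n)-\mathcal A(\tfrac n2)
\end{equation*}
by property (3). Property (2) gives $\mathcal A(n-1)=\mathcal A(n)-\mathcal A(\frac n2)$, so the right-hand side equals $\mathcal A(n)+\mathcal A(n-1)$.

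In both cases the two total Betti numbers coincide, so $B(\C C,n)$ is maximal. The only delicate step is the bookkeeping for the $k=0$ component; the remaining steps are direct applications of Lemma \ref{lem_partitions_puissances_2}.
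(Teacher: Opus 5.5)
Your proposal is correct and follows essentially the same route as the paper: you compute $b^*(B(\mathbb S^2,n))=\mathcal A(n)+\mathcal A(n-1)$, sum the contributions of the components of $\R B(\C C,n)$, and compare the two totals via Lemma \ref{lem_partitions_puissances_2}; the paper's computation also counts the component with no real points as $\mathcal A(\frac n2)$ rather than $2\mathcal A(\frac n2)$. The only difference is cosmetic: the paper first rewrites the complex side as $\mathcal A(n)+\mathcal A(n-2)$ (for $n$ even) or $2\mathcal A(n-1)$ (for $n$ odd) and then applies property (3) twice, whereas you apply (3) once and finish with property (2) or (1).
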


\begin{proof}
The total Betti number of $B(\C C, n) = B(\mathbb S^2,n)$ is
\begin{align*}
    \mathcal A(n) + \mathcal A(n-1) =
    \begin{cases}
        \mathcal A(n) + \mathcal A(n-2) &\text{ if $n$ is even};\\
        2 \mathcal A(n-1) & \text{ if $n$ is odd}.
    \end{cases}
\end{align*}

If $n$ is even, then by Lemma \ref{lem_cc_non_empty} the total Betti number of $\R B(\C C, n)$ is equal to
\begin{align*}
    \mathcal{A}(\frac{n}{2}) + \sum_{k = 1}^{\frac{n}{2}} 2 \mathcal{A}(\frac{n}{2}-k) &= \mathcal A(\frac{n}{2}) + 2 \sum_{k=0}^{\frac{n}{2}-1} \mathcal A(k)\\
    &= \sum_{k=0}^{\frac{n}{2}} \mathcal A(k) + \sum_{k=0}^{\frac{n}{2}-1} \mathcal A(k)\\
    &= \mathcal A(n) + \mathcal A(n-2).
\end{align*}

If $n$ is odd, then the total Betti number of $\R B(\C C, n)$ is equal to 

\begin{align*}
    \sum_{k=0}^{\lfloor \frac{n}{2} \rfloor} 2 \mathcal A(\lfloor \frac{n}{2} \rfloor - k) &= 2\sum_{k=0}^{\frac{n-1}{2}} \mathcal A(\frac{n-1}{2}-k)\\
    &= 2 \mathcal A(n-1).
\end{align*}

Hence, for any positive integer $n$, the configuration space $B(\C C, n)$ is maximal.
\end{proof}

\begin{lem}
    \label{lem_genus_zero_2}
    Let $C$ be a non-singular real algebraic curve of genus $0$ with empty real part and let $n$ be an even positive integer. The configuration space $B(\C C, n)$ is maximal.    
\end{lem}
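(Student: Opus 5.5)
We compute both total Betti numbers and compare them. Write $n = 2m$. On the complex side, $\C C$ is a sphere, so by the Example following Theorem \ref{thm_napolitano_compact}, $b^*(B(\C C,n)) = \mathcal A(n) + \mathcal A(n-1)$. Since $n$ is even, property (1) of Lemma \ref{lem_partitions_puissances_2} gives $\mathcal A(n-1) = \mathcal A(n-2)$. Hence
\begin{equation*}
b^*(B(\C C,n)) = \mathcal A(2m) + \mathcal A(2m-2).
\end{equation*}

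On the real side, $\R C = \emptyset$. Hence $\R B(\C C,n)$ is connected and equals $B(\C C/conj, m)$. Here $\C C/conj$ is the real projective plane: a closed surface of Euler characteristic $1$, with $b_1 = 1$. This is exactly the previous lemma for $\R C=\emptyset$ with $g=0$. Since the Notation $C^{\bar n}(g,T)$ and the preceding formula were written for $g\geq 1$, I will rederive the count directly from Theorem \ref{thm_napolitano_compact} with $b_1 = 1$:
\begin{equation*}
b^*(B(\R P^2, m)) = \sum_{i=0}^{m} \mathcal A(m-i) + \sum_{i=0}^{m-1} \mathcal A(m-1-i) = \sum_{j=0}^{m}\mathcal A(j) + \sum_{j=0}^{m-1}\mathcal A(j).
\end{equation*}
This agrees with the formula of the previous lemma specialised to $g=0$, since ${i \choose 0}=1$.

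We finish with property (3) of Lemma \ref{lem_partitions_puissances_2}, applied to $2m$ and to $2m-2$:
\begin{equation*}
\sum_{j=0}^{m}\mathcal A(j) = \mathcal A(2m), \qquad \sum_{j=0}^{m-1}\mathcal A(j) = \mathcal A(2m-2).
\end{equation*}
The real total Betti number is therefore $\mathcal A(n) + \mathcal A(n-2)$, which equals the complex one, so $B(\C C,n)$ is maximal. The computation is the same manipulation as in the even case of Lemma \ref{lem_genus_zero_1}.

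The only point needing care is the identification of $\C C/conj$ as $\R P^2$ with $\dim_{\Z_2} H_1 = 1$, together with the edge case $m-1 = 0$ in the second sum. A genus-zero real curve with empty real part is the conic $x^2+y^2+z^2=0$, acting on $\mathbb S^2$ by the antipodal map, so the quotient is indeed $\R P^2$. For $n = 2$ the second sum is $\mathcal A(0) = 1$, which matches $\mathcal A(n-2) = \mathcal A(0)$, so the identity holds there as well.
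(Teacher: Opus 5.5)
Your proof is correct and follows the paper's argument: identify $\R B(\C C,n)$ with $B(\R\P^2,\frac{n}{2})$, count its total Betti number via Theorem \ref{thm_napolitano_compact} with $b_1=1$, and use Lemma \ref{lem_partitions_puissances_2} to collapse the two sums to $\mathcal A(n)+\mathcal A(n-2)=b^*(B(\mathbb S^2,n))$. Your extra remarks (the antipodal model identifying the quotient with $\R\P^2$, and the $n=2$ edge case) just make explicit what the paper leaves implicit.
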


\begin{proof}
 The space $\R B(\C C, n)$ is $B(\R \P^2, \frac{n}{2})$. By Theorem \ref{thm_napolitano_compact}, its total Betti number is equal to

\begin{align*}
    b^*(B(\R \P^2,\frac{n}{2})) &= \sum_{i=0}^{\frac{n}{2}} \mathcal A(\frac{n}{2}-i) + \sum_{i=0}^{\frac{n}{2}-1} \mathcal A(\frac{n}{2}-1-i)\\
    &= \mathcal A(n)+ \mathcal A(n-2)\\
    &= b^*(B(\mathbb S^2, n))
\end{align*}
\end{proof}

\begin{rmk}
    If $n$ is odd and $C$ is a smooth real algebraic curve of genus $0$ with empty real part, then $\R B(\C C, n)$ is empty, and hence $B(\C C, n)$ is not maximal.
\end{rmk}

\subsection{Higher genera: non-maximal curves}

The goal of this section is to prove the following result.
\begin{prop}
    \label{prop_non_maximal_curve}
    Let $g$ and $n$ be positive integers. Let $C$ be a non-singular non-maximal real algebraic curve of genus $g$. Then the configuration space $B(\C C, n)$ is not maximal.
\end{prop}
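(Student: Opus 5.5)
The plan is to show directly that $b^*(\R B(\C C,n)) < b^*(B(\C C,n))$ whenever $r \leq g$. Both sides are already computed by the preceding results: Corollary \ref{cor_oriented_closed_surface} gives the left-hand side of the Smith--Thom inequality's right-hand side, and the lemmas of Subsection \ref{subsection_correspondence_vector} give the real side.

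\textbf{Step 1: compare each real side with the maximal case.} The real side only improves as $r$ grows. For a fixed $k$ with $k \equiv n \pmod 2$, summing $2^{\ell}$ over all vectors $(m_1,\dots,m_r)$ with $\sum m_i = k$ gives a quantity that is strictly increasing in $r$ as soon as $k \geq 1$. Indeed, adding a component adds new vectors and never removes any, since vectors with $m_{r+1}=0$ reproduce the old ones. The factor $\sum_{i} \binom{i+g-1}{g-1}\mathcal A(\frac{n-k}{2}-i)$ does not depend on $r$. Hence, for $1 \leq r \leq g$,
\begin{equation*}
b^*(\R B(\C C,n)) < b^*(\R B(\C C',n)),
\end{equation*}
where $C'$ is a hypothetical real structure of genus $g$ with $g+1$ real components, computed by the same formula. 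The inequality is strict because the term $k=n$ (or $k=1$ when $n$ is odd) is present and has $k \geq 1$.

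\textbf{Step 2: bound the maximal-case value.} I would then show that this formal value for $r=g+1$ is at most $b^*(B(\C C,n))$. This follows by applying the Smith--Thom inequality to an actual maximal curve of genus $g$, such as a hyperelliptic $M$-curve. Its configuration space has the same complex topology, since $B(\C C,n)$ depends only on the genus. Its real part has total Betti number exactly the formal value, by Lemma \ref{lem_cc_non_empty}. So no combinatorial identity is needed: the value for $r=g+1$ is at most $b^*(B(S,n))$, and strict monotonicity gives strict inequality for $r \leq g$.

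\textbf{Step 3: the case $r=0$.} If $\R C=\emptyset$ and $n$ is odd, the real part is empty and we are done. If $n$ is even, the real part is $B(\C C/conj,\frac n2)$ with $b_1 = g+1$. Here I must compare it against the bound from Step 2, or directly against $b^*(B(S,n))$.

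I expect Step 3 to be the main obstacle, since this non-orientable contribution is not of the same shape as the $r\geq 1$ formula. I would try to show it is strictly less than the $r=1$ value, or directly less than the Corollary \ref{cor_oriented_closed_surface} expression. Using Lemma \ref{lem_partitions_puissances_2}(3), $\sum_{i\le m}\mathcal A(i)=\mathcal A(2m)$, I would rewrite both sides as sums of $\mathcal A$ with binomial coefficients and compare term by term. The key point is that
\begin{equation*}
\binom{i+g}{g} \le \binom{2i+2g-1}{2g-1},
\end{equation*}
while the complex side carries extra strictly positive terms, for instance the $k=0$ contribution with $\mathcal A(n)$ versus $\mathcal A(\frac n2)$, using $\mathcal A(n) > \mathcal A(\frac n2)$ for $n \geq 2$. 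The bookkeeping of matching the indices $\frac n2 - i$ against $n-2k$ is where care is needed.
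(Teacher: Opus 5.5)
\textbf{Verdict: gap in Step 3, easily closed.} Your Steps 1 and 2 are the paper's argument for $\R C\neq\emptyset$. Your remark that vectors with vanishing extra entries reproduce the old components is exactly the paper's injection $\varphi\colon(m_1,\dots,m_r)\mapsto(m_1,\dots,m_r,0,\dots,0)$. Your explicit appeal to Smith--Thom for an actual M-curve of genus $g$ is the step the paper leaves implicit after obtaining $b^*(\R B(\C C,n))<b^*(\R B(\C M,n))$.

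The gap is Step 3, which you leave as a plan, and the first option you propose there fails as stated. For $\R C=\emptyset$ and $n=2m$, the total Betti number of $B(\C C/conj,m)$ is not strictly smaller than the $r=1$ value; it is equal to it. Write $F(x)=\sum_{t\geq 0}\mathcal A(t)x^t$.
\begin{itemize}
\item The $r=0$ expression $\sum_{i=0}^{m}\binom{i+g}{g}\mathcal A(m-i)+\sum_{i=0}^{m-1}\binom{i+g}{g}\mathcal A(m-1-i)$ is $[x^m]\,(1+x)(1-x)^{-g-1}F(x)$.
\item The $r=1$ value from Lemma \ref{lem_cc_non_empty} has weight $1$ for $k=0$ and weight $2$ for $k=2j\geq 2$. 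It is therefore $[x^m]\,\frac{1+x}{1-x}(1-x)^{-g}F(x)$, which is the same series.
\end{itemize}
The non-strict comparison is all you need, though. Since $g\geq 1$, Step 1 makes the $r=1$ value strictly smaller than the $r=g+1$ value: the component with vector $(2)$ exists because $n\geq 2$, and it has $k\geq 1$. Step 2 then bounds the $r=g+1$ value by $b^*(B(\C C,n))$. This closes the empty case by reduction to $r=1$, with no comparison against Corollary \ref{cor_oriented_closed_surface} at all.

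Your second option is what the paper does, but you have not fixed the index pairing. Expand $b^*(B(\C C,n))=\sum_{i=0}^{n}\binom{i+2g-1}{2g-1}\mathcal A(n-i)+\sum_{i=0}^{n-1}\binom{i+2g-1}{2g-1}\mathcal A(n-1-i)$.
\begin{itemize}
\item \emph{The paper's pairing.} The real term $\binom{i+g}{g}\mathcal A(\frac n2-i)$ is paired with the complex term of index $\frac n2+i$, which carries the same factor $\mathcal A(\frac n2-i)$; the second sums are paired the same way. Only $\binom{i+g}{g}<\binom{\frac n2+i+2g-1}{2g-1}$ is needed, and the unpaired complex terms of index below $\frac n2$ give strictness. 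No property of $\mathcal A$ is used.
\item \emph{Your pairing.} Your inequality $\binom{i+g}{g}\leq\binom{2i+2g-1}{2g-1}$ corresponds to pairing with index $2i$ instead. That also works, but you additionally need $\mathcal A(\frac n2-i)\leq\mathcal A(n-2i)$ and $\mathcal A(\frac n2-1-i)\leq\mathcal A(n-1-2i)$, i.e.\ that $\mathcal A$ is non-decreasing. This follows from parts (1) and (2) of Lemma \ref{lem_partitions_puissances_2}, but you did not state it. The odd-index complex terms are then the unpaired positive ones.
\end{itemize}
The rewriting via part (3) of that lemma is unnecessary for either pairing.
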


To prove the proposition, we distinguish two cases.

\begin{lem}
    Let $g$ and $n$ be positive integers. Let $C$ be a non-singular non-maximal real algebraic curve of genus $g$ with non-empty real part. Then the configuration space $B(\C C, n)$ is not maximal.
\end{lem}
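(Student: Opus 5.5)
We compare $b^*(\R B(\C C,n))$, computed by summing Lemma \ref{lem_cc_non_empty} over connected components, with $b^*(B(\C C,n))$ from Corollary \ref{cor_oriented_closed_surface}. We will show a strict inequality $b^*(\R B(\C C,n)) < b^*(B(\C C,n))$ whenever $1 \le r \le g$. Let $N=(n-k)/2$ and write $F_g(N)=\sum_{i=0}^{N}{i+g-1\choose g-1}\mathcal A(N-i)$ for the factor in Lemma \ref{lem_cc_non_empty}. Grouping components by $k$, we get
\begin{equation*}
b^*(\R B(\C C,n)) = \sum_{\substack{0\le k\le n\\ k\equiv n \ (2)}} \Big(\sum_{m_1+\dots+m_r=k} 2^{\ell(m)}\Big) F_g\Big(\frac{n-k}{2}\Big).
\end{equation*}
The inner sum is the coefficient of $t^k$ in $\big(1+2t/(1-t)\big)^r=\big((1+t)/(1-t)\big)^r$. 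Call this coefficient $D_r(k)$; it is strictly increasing in $r$ for $k\ge1$. Hence it suffices to treat the extreme case $r=g$, since fewer components only decrease the real side. That case is $g\geq 1$, because $r\ge1$.

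\textbf{Strategy 1.} The cleanest route is to avoid explicit comparison of binomial sums. We would use monotonicity in $r$ together with the fact that $M$-curves exist in every genus. For an $M$-curve of genus $g$ ($r=g+1$), the same sum with $D_{g+1}$ must equal $b^*(B(\C C,n))$, because the main theorem asserts maximality there. That proof comes later in the paper, so we cannot cite it. Instead we would establish the identity
\begin{equation*}
\sum_{k\equiv n\,(2)} D_{g+1}(k)\, F_g\Big(\frac{n-k}{2}\Big) = \sum_{k=0}^{n}\Big[{k+2g-1\choose 2g-1}+{k+2g-2\choose 2g-1}\Big]\mathcal A(n-k)
\end{equation*}
directly with generating functions. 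Write $\sum_n\mathcal A(n)x^n=\prod_{j\ge0}(1-x^{2^j})^{-1}=:P(x)$. Then $\sum_N F_g(N)y^N=P(y)/(1-y)^g$. The real side has generating function in $x$ (with $y=x^2$)
\begin{equation*}
\Big(\frac{1+x}{1-x}\Big)^{g+1}\frac{P(x^2)}{(1-x^2)^{g}}=\frac{(1+x)\,P(x^2)}{(1-x)^{2g+1}}.
\end{equation*}
The complex side is $(1+x)P(x)/(1-x)^{2g}$. Since $P(x)=P(x^2)/(1-x)$ (from Lemma \ref{lem_partitions_puissances_2}(3), or the product formula), the two coincide.

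Granting this identity, for a curve with $r\le g$ components we get
\begin{equation*}
b^*(\R B(\C C,n)) = \sum_k D_r(k)F_g\Big(\frac{n-k}{2}\Big) \le \sum_k D_{g+1}(k)F_g\Big(\frac{n-k}{2}\Big) = b^*(B(\C C,n)).
\end{equation*}
The inequality is strict as soon as some term with $k\ge1$ has $D_r(k)<D_{g+1}(k)$ and $F_g>0$. Taking $k=n$ gives $F_g(0)=1$, and $D_r(n)<D_{g+1}(n)$ because $n\ge1$. Hence $B(\C C,n)$ is not maximal.

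The main obstacle is the generating-function identity. One has to check carefully that the coefficient extraction matches Corollary \ref{cor_oriented_closed_surface} (its first, unsimplified form), including the edge ranges of $k$ in the second Napolitano sum. One also has to verify that the strict monotonicity $D_r(k)<D_{r+1}(k)$ holds for $k\ge1$. That step is immediate: $D_{r+1}=D_r*D_1$, with $D_1(0)=1$ and $D_1(j)=2$ for $j\ge1$, so $D_{r+1}(k)\ge D_r(k)+2D_r(0)>D_r(k)$.
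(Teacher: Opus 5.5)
Your argument is correct. Its first half is the paper's argument in another form: the inequality $D_r(k)\le D_{g+1}(k)$, strict for $k\ge 1$, is exactly the content of the paper's injection $\varphi$. That map pads $(m_1,\dots,m_r)$ with zeros to obtain a component of $\R B(\C M,n)$ for an $M$-curve $M$ of genus $g$, with the same total Betti number by Lemma \ref{lem_cc_non_empty}.

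The genuine difference is the last step. The paper stops at $b^*(\R B(\C C,n))<b^*(\R B(\C M,n))$. It closes the argument tacitly with the Smith--Thom inequality for $B(\C M,n)$, together with the existence of $M$-curves in every genus and $B(\C M,n)\cong B(\C C,n)$.

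You instead prove the equality $\sum_k D_{g+1}(k)F_g(\frac{n-k}{2})=b^*(B(\C C,n))$ outright, via the generating-function identity $\frac{(1+x)P(x^2)}{(1-x)^{2g+1}}=\frac{(1+x)P(x)}{(1-x)^{2g}}$. Both generating functions and the relation $P(x)=P(x^2)/(1-x)$ check out. You also rightly take the second Napolitano sum over $0\le k\le n-1$; the lower limit $k=1$ in the first line of the proof of Corollary \ref{cor_oriented_closed_surface} is a typo, corrected in its later lines.

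Your route is therefore purely combinatorial. It needs neither Smith--Thom nor the existence of $M$-curves, so your opening remark about $M$-curves can be dropped. As a by-product, the same identity proves maximality for $M$-curves for every $g\ge 1$ and both parities of $n$ at once. It thus replaces Lemma \ref{lem_genus_one} and the considerably longer computation of Proposition \ref{prop_genus_two_and_greater}, which the paper only writes out for even $n$.

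The paper's route is shorter for this lemma in isolation. But it is shorter only because it hands the comparison with $b^*(B(\C C,n))$ to Smith--Thom, which it does not even invoke explicitly at that point.
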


\begin{proof}
    Let $r$ be the number of connected components of $\R C$ and denote by $C_1$, ..., $C_r$ the connected components of $\R C$.
    Let $M$ be a non-singular maximal curve of genus $g$ and denote by $M_1$, ..., $M_{g+1}$ the connected components of $\R M$.

    We define a map $\varphi$ from the set of connected components of $\R B(\C C, n)$ to the set of connected components of  $\R B(\C M,n)$.
    Recall from Section \ref{subsection_correspondence_vector} that every connected component of $\R B(\C C, n)$ (respectively, of $\R B(\C M,n)$) is uniquely characterized by a vector with $r$ (respectively, $g+1$) entries which are non-negative integers.
    The map $\varphi$ is defined as follows: the image under $\varphi$ of a connected component of $\R B(\C C, n)$ characterized by a vector $(m_1,...,m_r)$ is the connected component of $\R B(\C M,n)$ corresponding to the vector  $(m_1,...,m_r,0,...,0)$. The map $\varphi$ is clearly injective but not surjective, as $r < g+1$.

    It remains to remark that the connected component of $\R B(\C C, n)$ corresponding to the vector $(m_1,...,m_r)$ has the same total Betti number as the connected component of $\R B(\C M,n)$ associated to the vector $(m_1,...,m_r,0,...,0)$. This is a direct consequence of Lemma \ref{lem_cc_non_empty}.

    We obtain $b^*(\R B(\C C, n)) < b^*(\R B(\C M,n))$, which finishes the proof.
\end{proof}

Now we turn to the case of non-maximal real curves with empty real part.

\begin{lem}
    Let $g$ and $n$ be positive integers. Let $C$ be a non-singular non-maximal real algebraic curve of genus $g$ with empty real part. Then the configuration space $B(\C C, n)$ is not maximal.
\end{lem}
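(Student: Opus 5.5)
The plan is to compare total Betti numbers directly, since there is no maximal curve whose components we can match. If $n$ is odd, $\R B(\C C,n)$ is empty while $B(\C C,n)$ is non-empty, so the space is not maximal. From now on assume $n$ is even and write $m=\frac n2$.

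By the preceding lemma, the real part has a single connected component, $B(\C C/conj, m)$. Its total Betti number is
\begin{equation*}
b^*(\R B(\C C,n)) = \sum_{i=0}^{m} {i+g \choose g}\mathcal A(m-i) + \sum_{i=0}^{m-1}{i+g\choose g}\mathcal A(m-1-i).
\end{equation*}
On the complex side, Corollary \ref{cor_oriented_closed_surface} gives
\begin{equation*}
b^*(B(\C C,n)) = \mathcal A(n) + \sum_{k=1}^{n}\left[{k+2g-1\choose 2g-1}+{k+2g-2\choose 2g-1}\right]\mathcal A(n-k).
\end{equation*}
The aim is to show the strict inequality $b^*(\R B(\C C,n)) < b^*(B(\C C,n))$.

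The cleanest route is a term-by-term domination. First, use Lemma \ref{lem_partitions_puissances_2}(3), $\mathcal A(2j)=\sum_{s=0}^{j}\mathcal A(s)$, together with $\mathcal A$ nondecreasing, to get $\mathcal A(n-2i)\ge \mathcal A(m-i)$. Next, compare coefficients, using that $\binom{k+2g-1}{2g-1}$ grows much faster in $k$ than $\binom{i+g}{g}$ does in $i$:
\begin{itemize}
\item the $k=2i$ term of the complex sum has coefficient $\binom{2i+2g-1}{2g-1}\ge\binom{i+g}{g}$, and this dominates the $i$-th term $\binom{i+g}{g}\mathcal A(m-i)$ of the first real sum;
\item the $k=2i+2$ term, via its coefficient $\binom{2i+2g}{2g-1}$ and $\mathcal A(n-2i-2)\ge\mathcal A(m-1-i)$, dominates the $i$-th term of the second real sum;
\item the inequality $\binom{2i+2g-1}{2g-1}\ge \binom{i+g}{g}$ follows by noting both sides are polynomials in $i$, of degree $2g-1\ge g$ on the left, and checking it via a product formula: compare the factors $\frac{2i+j}{j}\ge\frac{i+j}{j}$.
\end{itemize}
Care is needed at $i=0$ with $g=1$, where both coefficients are small. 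There, use the leading term $\mathcal A(n)$ and the $k=2$ term as needed.

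Strictness then comes from unused terms that are positive. The odd-$k$ terms of the complex side, for instance $k=1$ with coefficient $2g\ge 2$ times $\mathcal A(n-1)\ge1$, are never used in the matching, so the inequality is strict.

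The main obstacle is the bookkeeping of this matching. Each complex term must be used at most once: the $k=2i$ terms serve the first real sum, and the $\mathcal A(n)$ term and the $k=2i+2$ terms serve the second. Since $\mathcal A(n)$ is already $\ge \mathcal A(m)$, the $\mathcal A(n)$ term covers $i=0$ of the first sum, freeing $k=0$ there. The assignment to settle is therefore: $i$-th term of the first sum $\mapsto$ $k=2i$ (with $k=0$ meaning $\mathcal A(n)$), and $i$-th term of the second sum $\mapsto$ $k=2i+2$, whose coefficient is at least $\binom{2i+2g}{2g-1}$. If the coefficient inequality fails for small $g,i$, I would first try using the $+\binom{k+2g-2}{2g-1}$ part of each coefficient. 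Failing that, I would split off small cases and verify them directly with Lemma \ref{lem_partitions_puissances_2}(2).
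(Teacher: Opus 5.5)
Your proof is correct, but it pairs terms differently from the paper. The paper keeps the complex side in the raw Napolitano form and compares coefficients of the \emph{same} quantity. There $\mathcal A(m-i)$ occurs with coefficients $\binom{m+i+2g-1}{2g-1}$ and $\binom{m+i+2g-2}{2g-1}$, one from each sum. The single inequality $\binom{i+g}{g}<\binom{m+i+2g-1}{2g-1}$, true since $m\ge 1$, applied at $i$ and at $i-1$, dominates both real sums. No property of $\mathcal A$ beyond non-negativity is used, and strictness comes directly from the strict coefficient inequality.

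You instead pair $\mathcal A(m-i)$ with $\mathcal A(2(m-i))$ at index $k=2i$. This requires Lemma~\ref{lem_partitions_puissances_2}(3), though only in the weak form $\mathcal A(2j)\ge\mathcal A(j)$, so monotonicity is not actually needed. It also gives only non-strict coefficient bounds, with equality at $i=0$. Strictness must therefore come from the unused odd-$k$ terms, which you supply correctly; the $k=1$ coefficient is $2g+1$ rather than $2g$, but only positivity matters. In exchange, your route shows slightly more: the real side is already dominated by the even-$k$ half of the complex sum.

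One point of bookkeeping needs fixing. Contrary to your claim, your assignment does not use each complex term at most once. For $1\le j\le m$, the term $k=2j$ is claimed both by the term $i=j$ of the first real sum and by the term $i=j-1$ of the second. This is legitimate only because its coefficient splits as
\begin{equation*}
\binom{2j+2g-1}{2g-1}+\binom{2j+2g-2}{2g-1},
\end{equation*}
and your two bullets use exactly these two different summands, each against $\mathcal A(2(m-j))\ge\mathcal A(m-j)$. Say so explicitly; otherwise ``whose coefficient is at least $\binom{2i+2g}{2g-1}$'' reads as double counting.

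Finally, your product-formula argument already proves $\binom{2i+2g-1}{2g-1}\ge\binom{i+g}{g}$ and $\binom{2i+2g}{2g-1}\ge\binom{i+g}{g}$ for all $i\ge 0$ and $g\ge 1$. The special treatment of $i=0$, $g=1$ and the fallback case analysis are therefore unnecessary.
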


\begin{proof}
    If $n$ is odd, then $\R B(\C C, n)$ is empty, and $B(\C C, n)$ is clearly not maximal.

    Suppose that $n$ is even.
    The total Betti number of $B(\C C, n)$ and $\R B(\C C, n) = B(\C C/conj, \frac{n}{2})$ can be computed using Theorem \ref{thm_napolitano_compact}.

    On the one hand, one has
    \begin{align*}
        b^*(B(\C C, n)) = \sum_{i=0}^{n} {i+2g-1 \choose 2g-1} \mathcal A(n-i) + \sum_{i=0}^{n-1} {i+2g-1 \choose 2g-1} \mathcal A(n-1-k).
    \end{align*}

    On the other hand,

    \begin{align*}
        b^*(B(\C C/conj, \frac{n}{2})) = \sum_{i=0}^{\frac{n}{2}} {i+g \choose g} \mathcal A(\frac{n}{2}-i) + \sum_{i=0}^{\frac{n}{2}-1} {i+g \choose g} \mathcal A(\frac{n}{2}-1-i).
    \end{align*}

    Notice that for any positive integer $g$, for any positive even integer $n$, for  any non-negative integer $k \leq \frac{n}{2}$, one has ${k+g \choose g} < {\frac{n}{2}+k+2g-1 \choose 2g-1}$.

    Let $k$ be an integer such that $0 \leq k \leq \frac{n}{2}$. One then sees that the coefficient in front of $\mathcal{A}(\frac{n}{2}-k)$ in the expression of $b^*(B(\C C/conj, \frac{n}{2}))$ above is smaller than the coefficient in front of $\mathcal{A}(\frac{n}{2}-k)$ in the expression of $b^*(B(\C C, n))$. This finishes the proof.
\end{proof}

\subsection{Higher genera: maximal curves}

Let $g$ and $n$ be positive integers. In what follows, $C$ is a maximal non-singular real algebraic curve of genus $g$.

\begin{lem}
    \label{lem_rewriting_b_star}
    If $n$ is even, then $b^*(\R B(\C C, n))$ is equal to 
    \begin{equation*}
        \sum_{i = 0}^{\frac{n}{2}} {i+g-1 \choose g-1} \mathcal A(\frac{n}{2}-i) + \sum_{k=1}^{\frac{n}{2}} \sum_{\ell = 1}^{g+1} {g+1 \choose g+1-\ell} {2k-1 \choose \ell - 1} 2^{\ell} \sum_{i=0}^{\frac{n}{2}-k} {i+g-1 \choose g-1} \mathcal A(\frac{n}{2}-k-i).
    \end{equation*}
    If $n$ is odd, then $b^*(\R B(\C C, n))$ is equal to
    \begin{equation*}
        \sum_{k=0}^{\lfloor \frac{n}{2} \rfloor} \sum_{\ell = 1}^{g+1} {g+1\choose \ell} {2k \choose \ell -1} 2^{\ell} \sum_{i=0}^{\lfloor \frac{n}{2}\rfloor-k} {i+g-1 \choose g-1} \mathcal A(\lfloor \frac{n}{2} \rfloor -k-i).
    \end{equation*}
\end{lem}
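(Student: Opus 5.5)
We obtain the formula by summing the total Betti numbers of the connected components of $\R B(\C C,n)$ from Lemma \ref{lem_cc_non_empty}, grouping components according to the number of real points and to the number of components of $\R C$ that they meet. Since $C$ is maximal, $\R C$ has $r = g+1$ components $C_1,\dots,C_{g+1}$. The components of $\R B(\C C,n)$ are in bijection with vectors $(m_1,\dots,m_{g+1})$ of non-negative integers with $\sum m_i = k$, where $k \equiv n \pmod 2$ and $0\le k\le n$. By Lemma \ref{lem_cc_non_empty}, the component indexed by such a vector has total Betti number $2^{\ell}\sum_{i=0}^{\frac{n-k}{2}}{i+g-1\choose g-1}\mathcal A(\frac{n-k}{2}-i)$, where $\ell$ is the number of non-zero entries. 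This depends only on $(k,\ell)$, so we only need to count vectors with given $k$ and $\ell$.

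The counting step is the following. The number of vectors in $\Z_{\geq 0}^{g+1}$ with entry sum $k\geq 1$ and exactly $\ell$ non-zero entries equals ${g+1\choose \ell}{k-1\choose \ell-1}$. Indeed, we first choose the support ($\ell$ positions among $g+1$), and then count compositions of $k$ into $\ell$ positive parts. For $k=0$ there is a single vector, the zero vector, with $\ell=0$. The number $\ell$ ranges over $1,\dots,g+1$, and terms with $\ell>k$ vanish automatically because the binomial coefficient is zero.

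For even $n$, we write $k = 2k'$. The case $k'=0$ gives one component with $\ell=0$, which contributes $\sum_{i=0}^{n/2}{i+g-1\choose g-1}\mathcal A(\frac n2-i)$; this is the first sum in the statement. For $k'\geq1$, we have $\frac{n-k}{2} = \frac n2 - k'$ and the count is ${g+1\choose \ell}{2k'-1\choose \ell-1}$, which matches the statement since ${g+1\choose g+1-\ell}={g+1\choose \ell}$. For odd $n$, we write $k=2k'+1$ with $0\le k'\le\lfloor n/2\rfloor$. Then $k\geq1$ always, so $\ell\geq1$, and $\frac{n-k}{2}=\lfloor \frac n2\rfloor-k'$. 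The count is ${g+1\choose\ell}{2k'\choose\ell-1}$, which gives the second formula.

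There is no real obstacle here. The only points requiring care are the composition count and the separate treatment of the $k=0$ term in the even case, where the factor $2^0=1$ comes from the empty product of circles. One also has to check that the inner sum over $i$ in Lemma \ref{lem_cc_non_empty} is reindexed correctly, with $\frac{n-k}{2}$ written as $\frac n2-k'$ or $\lfloor\frac n2\rfloor - k'$.
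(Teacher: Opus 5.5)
Your proposal is correct and follows the same route as the paper. You sum the component-wise Betti numbers from Lemma \ref{lem_cc_non_empty} over the vectors indexing the components, group them by the number $k$ of real points and the number $\ell$ of non-zero entries, and count these vectors as ${g+1 \choose \ell}{k-1 \choose \ell-1}$, with the $k=0$ term handled separately in the even case. Your justification of this count (choice of support, then compositions of $k$ into $\ell$ positive parts) just makes explicit what the paper leaves implicit.
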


\begin{proof}
    It suffices to use Lemma \ref{lem_cc_non_empty} and the correspondance between connected components of $\R B(\C C, n)$ and vectors with $g+1$ entries which are non-negative integers and whose sum is an integer smaller than or equal to $n$ and whose reduction modulo $2$ is equal to $n \mod 2$.

    If $n$ is even, we write 

    \begin{align*}
    &b^*(\R B(\C C, n)) = \sum_{k = 0}^{\frac{n}{2}} \sum_{\substack{(h_1,...,h_{g+1}) \in \N^{g+1}\\ h_1 + ... + h_{g+1} = 2k}} 2^{\# \{ i  \in \{1,...,g+1 \} \; | \; h_i \neq 0 \} } \sum_{i=0}^{\frac{n}{2}-k} {i+g-1 \choose g-1} \mathcal A(\frac{n}{2} - k - i)\\ &= \sum_{i = 0}^{\frac{n}{2}} {i+g-1 \choose g-1} \mathcal A(\frac{n}{2}-i) + \sum_{k=1}^{\frac{n}{2}} \sum_{\ell = 1}^{g+1} {g+1 \choose g+1-\ell} {2k-1 \choose \ell - 1} 2^{\ell} \sum_{i=0}^{\frac{n}{2}-k} {i+g-1 \choose g-1} \mathcal A(\frac{n}{2}-k-i)
\end{align*}

    If $n$ is odd, we write

\begin{align*}
    b^*(\R B(\C C, n)) &= \sum_{k=0}^{\lfloor \frac{n}{2} \rfloor} \sum_{\substack{(h_1,...,h_{g+1}) \in \N^{g+1} \\ h_1,...,h_{g+1} = 2k+1}} 2^{\# \{i \in \{1,...,g+1 \} \; | \; h_i \neq 0 \}} \sum_{i=0}^{\lfloor \frac{n}{2} \rfloor - k} {i+g-1 \choose g-1} \mathcal A(\lfloor \frac{n}{2} \rfloor - k - i)\\
    &= \sum_{k=0}^{\lfloor \frac{n}{2} \rfloor} \sum_{\ell = 1}^{g+1} {g+1\choose \ell} {2k \choose \ell -1} 2^{\ell} \sum_{i=0}^{\lfloor \frac{n}{2}\rfloor-k} {i+g-1 \choose g-1} \mathcal A(\lfloor \frac{n}{2} \rfloor -k-i)
\end{align*}
\end{proof}

This rewriting of $b^*(\R B(\C C, n))$ allows one to directly compare $b^*(\R B(\C C, n))$ to $b^*(B(\C C, n))$ when the genus $g$ of $C$ is equal to $1$. In this case, as $g-1= 0$, all the expressions of type ${i+g-1 \choose g-1}$ are equal to $1$. Furthermore, $\ell$ can only take the values $1$ and $2$.

\begin{lem}
    \label{lem_genus_one}
    Let $n$ be a positive integer and let $C$ be a maximal curve of genus $1$. Then $B(\C C, n)$ is maximal.
\end{lem}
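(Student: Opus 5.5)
The plan is to compute both total Betti numbers explicitly for $g=1$ and check that they coincide, treating $n$ even and $n$ odd separately.

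\textbf{Complex side.} Specialize Corollary \ref{cor_oriented_closed_surface} to $g=1$, so that $2g-1=1$ and every binomial ${x \choose 1}$ equals $x$ for $x\ge 0$ (and $0$ for $x=-1$). Write $m=\lfloor n/2\rfloor$.
\begin{itemize}
\item For $n$ even, the bracket is $2(2k)+(2k-1)+(2k+1)=8k$ when $k\ge 1$. For $k=0$ it is $0+0+1=1$, using ${-1 \choose 1}=0$. Hence
\begin{equation*}
b^*(B(\C C,n))=\mathcal A(n)+\sum_{k=1}^{m}8k\,\mathcal A(n-2k).
\end{equation*}
\item For $n$ odd, the bracket is $2(2k+1)+2k+(2k+2)=4(2k+1)$. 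Hence
\begin{equation*}
b^*(B(\C C,n))=\sum_{k=0}^{m}4(2k+1)\,\mathcal A(n-1-2k).
\end{equation*}
\end{itemize}

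\textbf{Real side.} Use Lemma \ref{lem_rewriting_b_star} with $g=1$. All factors ${i+g-1\choose g-1}$ equal $1$, and $\ell\in\{1,2\}$.
\begin{itemize}
\item For $n$ even, the $\ell=1$ term contributes ${2\choose 1}{2k-1\choose 0}2=4$. The $\ell=2$ term contributes ${2\choose 0}{2k-1\choose 1}4=4(2k-1)$. Their sum is $8k$.
\item For $n$ odd, the $\ell=1$ term contributes ${2\choose1}{2k\choose0}2=4$. The $\ell=2$ term contributes ${2\choose2}{2k\choose1}4=8k$. Their sum is $4(2k+1)$.
\end{itemize}
So the coefficients of the real side already agree with those of the complex side. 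The real side then reads
\begin{equation*}
\sum_{i=0}^{m}\mathcal A(m-i)+\sum_{k=1}^{m}8k\sum_{i=0}^{m-k}\mathcal A(m-k-i)
\end{equation*}
in the even case. In the odd case it reads $\sum_{k=0}^{m}4(2k+1)\sum_{i=0}^{m-k}\mathcal A(m-k-i)$.

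\textbf{Collapsing the inner sums.} Each inner sum is $\sum_{j=0}^{p}\mathcal A(j)$ with $p=m-k$. By Lemma \ref{lem_partitions_puissances_2}(3) applied to the integer $2p$, this equals $\mathcal A(2p)$. When $n$ is odd, property (1) lets us also write this as $\mathcal A(2p+1)$ if needed.
\begin{itemize}
\item For $n$ even, $2(m-k)=n-2k$, so the real side becomes $\mathcal A(n)+\sum_{k\ge1}8k\,\mathcal A(n-2k)$.
\item For $n$ odd, $2(m-k)=n-1-2k$, so the real side becomes $\sum_k 4(2k+1)\mathcal A(n-1-2k)$.
\end{itemize}
Both agree with the complex side, so by definition $B(\C C,n)$ is maximal.

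\textbf{Main obstacle.} The only delicate points are bookkeeping. First, the boundary term $k=0$ in the even case, where one must use ${-1\choose 1}=0$ and the fact that Lemma \ref{lem_rewriting_b_star} isolates the empty-vector component. Second, matching the index ranges of the double sums. I would verify these conventions against the small cases $n=1,2$ before writing out the general computation.
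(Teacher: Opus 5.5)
Your proposal is correct and follows essentially the same route as the paper: you specialize Lemma \ref{lem_rewriting_b_star} and Corollary \ref{cor_oriented_closed_surface} to $g=1$, collapse the inner sums via Lemma \ref{lem_partitions_puissances_2}(3), and match the coefficients $8k$ for even $n$ and $8k+4$ for odd $n$. Your explicit treatment of the $k=0$ boundary term with the convention ${-1 \choose 1}=0$ is exactly what the paper uses implicitly.
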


\begin{proof}

    First, assume that $n$ is even. Using Lemma \ref{lem_rewriting_b_star}, we write 

    \begin{align*}
    b^*(\R B(\C C, n))&= \sum_{i=0}^{\frac{n}{2}} \mathcal A(\frac{n}{2}-i) + \sum_{k=1}^{\frac{n}{2}} \sum_{\ell=1}^2 {2 \choose 2-\ell} {2k-1 \choose \ell-1} 2^{\ell} \sum_{i=0}^{\frac{n}{2}-k} \mathcal{A}(\frac{n}{2}-k)\\ 
    &= \mathcal A(n) + \sum_{k=1}^{\frac{n}{2}} \sum_{\ell=1}^2 {2 \choose 2-\ell} {2k-1 \choose \ell-1} 2^{\ell} \mathcal A(n-2k) \\
    &= \mathcal A(n) + \sum_{k=1}^{\frac{n}{2}} (4 + (2k-1)\times 4) \mathcal A(n-2k)\\
    & = \mathcal A(n) + \sum_{k=1}^{\frac{n}{2}} 8k \mathcal A(n-2k)
\end{align*}

    On the other hand, using Corollary \ref{cor_oriented_closed_surface}, we write

    \begin{align*}
        b^*(B(\C C, n)) &= \sum_{k=0}^{\frac{n}{2}} \left[ 2{2k \choose 1} + {2k-1 \choose 1} + {2k+1 \choose 1} \right] \mathcal A(n-2k)\\
        &= \mathcal A(n) + \sum_{k=1}^{\frac{n}{2}} 8k \mathcal A(n-2k)
    \end{align*}

    Now, suppose that $n$ is odd. Using Lemma \ref{lem_rewriting_b_star}, we write

    \begin{align*}
        b^*(\R B(\C C, n)) &= \sum_{k=0}^{\lfloor \frac{n}{2} \rfloor} \sum_{\ell = 1}^2 {2 \choose \ell} {2k \choose \ell -1} 2^{\ell} \sum_{i=0}^{\lfloor \frac{n}{2} \rfloor-k} \mathcal A(\lfloor \frac{n}{2} \rfloor -k-i)\\
        &= \sum_{k=0}^{\lfloor \frac{n}{2} \rfloor} \sum_{\ell = 1}^2 {2 \choose \ell} {2k \choose \ell -1} 2^{\ell} \mathcal A(n-1-2k)\\
        &= \sum_{k=0}^{\lfloor \frac{n}{2} \rfloor} \left( 2 \times 1 \times 2 + 1 \times 2k \times 4\right) \mathcal A(n-1-2k)\\
        &= \sum_{k=0}^{\lfloor \frac{n}{2} \rfloor} (8k+4) \mathcal A(n-1-2k).
    \end{align*}

    On the other hand, applying Corollary \ref{cor_oriented_closed_surface} gives 

    \begin{align*}
        b^*(B(\C C, n)) &= \sum_{k=0}^{\lfloor \frac{n}{2} \rfloor} \left[ 2{2k+1 \choose 1} + {2k \choose 1} + {2k+2 \choose 1} \right] \mathcal A(n-2k-1)\\
        &= \sum_{k=0}^{\lfloor \frac{n}{2} \rfloor} (8k+4) \mathcal A(n-2k-1).
    \end{align*}

    This finishes the proof.
    
\end{proof}

To generalize this result to the case $g \geq 2$, for any positive integer $n$ with reduction modulo $2$ denoted by $\bar n$, for any integer $T$ such that $0 \leq T \leq \lfloor \frac{n}{2} \rfloor$, we denote by $R^{\bar n}(g,T)$ the coefficient in front of $\mathcal A(n-2k-\bar n)$ in the expression of $b^*(\R B(\C C, n))$ given in Lemma \ref{lem_rewriting_b_star} and prove that it is equal to $C^{\bar n}(g,T) = 2 {2k+2g-2 + \bar n \choose 2g-1} + {2k+2g-3+\bar n \choose 2g-1} + {2k+2g-1+ \bar n \choose 2g-1}$, which is the coefficient in front of $\mathcal A(n-2k- \bar n)$ in the expression of $b^*(B(\C C, n))$ given by Corollary \ref{cor_oriented_closed_surface}.

\begin{lem}
    Let $n$ be a positive integer, let $g$ be an integer greater than $1$ and let $T$ be an integer such that $0 \leq T \leq \lfloor \frac{n}{2} \rfloor$.

    If $n$ is even, then
    \begin{align*}
        R^{0}(g,T) = {T+g-2 \choose g-2} + \sum_{k=1}^T {T-k+g-2 \choose g-2} \sum_{\ell=1}^{g+1} {g+1 \choose \ell} {2k-1 \choose \ell - 1} 2^{\ell}.
    \end{align*}

    If $n$ is odd, then

    \begin{align*}
        R^{1}(g,T) = \sum_{k=0}^T {T-k+g-2 \choose g-2} \sum_{\ell = 1}^{g+1} {g+1 \choose \ell} {2k \choose \ell -1} 2^{\ell}.
    \end{align*}
\end{lem}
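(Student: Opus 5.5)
The plan is to start from the expression of $b^*(\R B(\C C, n))$ in Lemma \ref{lem_rewriting_b_star} and regroup its terms by the argument of $\mathcal A$. Write $N = \lfloor \frac{n}{2} \rfloor$. We do not try to compare with $C^{\bar n}(g,T)$ yet; we only extract the coefficient $R^{\bar n}(g,T)$ of $\mathcal A(N - T)$. This is the same as the coefficient of $\mathcal A(n-2T-\bar n)$, by the identification with Lemma \ref{lem_partitions_puissances_2}(3). That identification is the convention implicit in the proof of Lemma \ref{lem_genus_one}, where the inner sum $\sum_{i=0}^{N-k} \mathcal A(N-k-i)$ is replaced by $\mathcal A(n-2k-\bar n)$.

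The first step collapses the innermost sums. We use the elementary identity
\[
  \sum_{i=0}^{M} {i+g-1 \choose g-1} \mathcal A(M-i) = \sum_{j=0}^{M} {j+g-2 \choose g-2} \sum_{i=0}^{M-j} \mathcal A(i).
\]
This follows from the hockey-stick identity ${i+g-1 \choose g-1} = \sum_{j=0}^{i} {j+g-2 \choose g-2}$, valid for $g \geq 2$, followed by an exchange of summation. By Lemma \ref{lem_partitions_puissances_2}(3), $\sum_{i=0}^{M-j}\mathcal A(i) = \mathcal A(2(M-j)+\bar n)$, i.e.\ the quantity indexed by $M-j$. Applying this with $M = N - k$ rewrites each summand of Lemma \ref{lem_rewriting_b_star} as
\[
  \sum_{j} {j+g-2 \choose g-2}\,\mathcal A\bigl(n - 2(k+j) - \bar n\bigr).
\]

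The second step sets $T = k + j$, so $j = T - k$, and collects the coefficient of $\mathcal A(n-2T-\bar n)$.

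In the even case, the term without $k$ (the first sum in Lemma \ref{lem_rewriting_b_star}, i.e.\ $k=0$) contributes ${T+g-2 \choose g-2}$. The terms $k = 1, \dots, T$ contribute ${T-k+g-2 \choose g-2}$ times the weight $\sum_{\ell} {g+1 \choose g+1-\ell}{2k-1 \choose \ell-1}2^\ell$. Since ${g+1 \choose g+1-\ell} = {g+1 \choose \ell}$, this gives the stated $R^0(g,T)$.

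In the odd case the same regrouping, with $k$ running from $0$ to $T$, gives $R^1(g,T)$ directly.

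The main obstacle is bookkeeping rather than any real difficulty. Care is needed with the ranges: $j \leq N - k$ becomes $T \leq N$, which is consistent with the hypothesis $0 \leq T \leq N$. One must also make sure the identification of $\sum_{i\le M}\mathcal A(i)$ with the $\mathcal A$-term used to define $R^{\bar n}(g,T)$ is applied uniformly in both parities. If that identification turns out awkward, the fallback is to define $R^{\bar n}(g,T)$ directly as the coefficient of $\sum_{i \le N-T} \mathcal A(i)$ and run the same argument. The hypothesis $g > 1$ is used only so that ${\cdot \choose g-2}$ makes sense in the hockey-stick step.
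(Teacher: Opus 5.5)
Your proposal is correct and is essentially the paper's own argument. The paper writes $\binom{i+g-1}{g-1}$ through the differences $\binom{i+g-1}{g-1}-\binom{i+g-2}{g-1}=\binom{i+g-2}{g-2}$, which is your hockey-stick identity plus exchange of summation. It then collapses the partial sums of $\mathcal A$ via Lemma~\ref{lem_partitions_puissances_2}(3) into $\mathcal A(n-2k-2i-\bar n)$ and reads off the coefficient by setting $i=T-k$. The only blemish is wording. The quantity whose coefficient defines $R^{\bar n}(g,T)$ is $\mathcal A(n-2T-\bar n)=\sum_{m=0}^{\lfloor n/2\rfloor-T}\mathcal A(m)$, not $\mathcal A(\lfloor n/2\rfloor-T)$ in the original expression as your first paragraph says. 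Also, your $\mathcal A(2(M-j)+\bar n)$ agrees with $\mathcal A(n-2(k+j)-\bar n)$ only after invoking Lemma~\ref{lem_partitions_puissances_2}(1). Your two computational steps nonetheless compute exactly the right coefficient.
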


\begin{proof}
    First, suppose that $n$ is even. Notice that as $g \geq 2$, for any integer $k$ such that $0 \leq k \leq \frac{n}{2}$ one has
    \begin{align*}
        \sum_{i=0}^{\frac{n}{2}-k} {i+g-1 \choose g-1} \mathcal A(\frac{n}{2}-k-i) &= \sum_{i=0}^{\frac{n}{2}-k} \left[ {i+g-1 \choose g-1} - {i-1+g-1 \choose g-1} \right] \sum_{m=0}^{\frac{n}{2}-k-i} \mathcal A(\frac{n}{2}-k-m)\\
        & = \sum_{i=0}^{\frac{n}{2}-k} {i+g-2 \choose g-2} \mathcal A(n-2k-2i).
    \end{align*}

    Using this observation, we write 

    \begin{align*}
        &\sum_{i = 0}^{\frac{n}{2}} {i+g-1 \choose g-1} \mathcal A(\frac{n}{2}-i) + \sum_{k=1}^{\frac{n}{2}} \sum_{\ell = 1}^{g+1} {g+1 \choose g+1-\ell} {2k-1 \choose \ell - 1} 2^{\ell} \sum_{i=0}^{\frac{n}{2}-k} {i+g-1 \choose g-1} \mathcal A(\frac{n}{2}-k-i)\\   
        & = \sum_{i=0}^{\frac{n}{2}} {i+g-2 \choose g-2} \mathcal A(n-2i) + \sum_{k=1}^{\frac{n}{2}-1} \sum_{\ell = 1}^{g+1} {g+1 \choose \ell} {2k-1 \choose \ell - 1} 2^{\ell} \sum_{i=0}^{\frac{n}{2}-k} {i+g-2 \choose g-2} \mathcal A(n-2k-2i).
    \end{align*}

    To extract the coefficient in front of $\mathcal A(n-2T)$, for any $k \leq T$, we have to put $i = T - k$. This gives
    \begin{align*}
        R^0(g,T) = {T+g-2 \choose g-2} + \sum_{k=1}^T {T-k+g-2 \choose g-2} \sum_{\ell=1}^{g+1} {g+1 \choose \ell} {2k-1 \choose \ell - 1} 2^{\ell}
    \end{align*}

    The case in which $n$ is odd is obtained similarly. 
    The opening observation has to be replaced with the following. For any integer $k$ such that $0 \leq k \leq \lfloor \frac{n}{2} \rfloor$, one has

    \begin{align*}
        \sum_{i=0}^{\lfloor \frac{n}{2} \rfloor -k} {i+g-1 \choose g-1} \mathcal A(\lfloor \frac{n}{2} \rfloor-k-i) &= \sum_{i=0}^{\lfloor \frac{n}{2} \rfloor -k} \left[ {i+g-1 \choose g-1} - {i+g-2\choose g-1} \right] \sum_{m=0}^{\lfloor \frac{n}{2} \rfloor -k-i} \mathcal A(\lfloor \frac{n}{2} \rfloor-k-i-m)\\
        &= \sum_{i=0}^{\lfloor \frac{n}{2} \rfloor -k} {i+g-2 \choose g-2} \mathcal A(n-1-2k-2i).
    \end{align*}
\end{proof}

\begin{prop}
    \label{prop_genus_two_and_greater}
    Let $g$ be an integer greater than $1$, let $n$ be a positive integer and let $C$ be a maximal non-singular real algebraic curve of genus $g$. Then $B(\C C, n)$ is maximal.
\end{prop}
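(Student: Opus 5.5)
The plan is to show, for every $T$ with $0 \le T \le \lfloor \frac{n}{2} \rfloor$, that $R^{\bar n}(g,T) = C^{\bar n}(g,T)$, using the formulas for $R^{\bar n}(g,T)$ from the preceding lemma. Then $b^*(\R B(\C C,n))$ and $b^*(B(\C C,n))$, expanded in the numbers $\mathcal A(n-2T-\bar n)$, agree coefficient by coefficient, so $B(\C C,n)$ is maximal. I would prove the identity of coefficients with generating functions in an auxiliary variable $y$, with $x = y^2$. The point is that both sides are even or odd parts of a single rational function.

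\textbf{Complex side.} In the proof of Corollary \ref{cor_oriented_closed_surface}, $b^*(B(\C C,n)) = \sum_{k} c_k \mathcal A(n-k)$ with $c_0 = 1$ and $c_k = {k+2g-1 \choose 2g-1} + {k+2g-2 \choose 2g-1}$. Since $\sum_k {k+2g-1\choose 2g-1} y^k = (1-y)^{-2g}$, we get $\sum_k c_k y^k = (1+y)(1-y)^{-2g}$. Folding with $\mathcal A(2m+1)=\mathcal A(2m)$ (Lemma \ref{lem_partitions_puissances_2}) gives the coefficient of $\mathcal A(n-2T-\bar n)$ as $c_{2T+\bar n} + c_{2T+\bar n-1}$, which is $C^{\bar n}(g,T)$. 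This is the coefficient of $y^{2T+\bar n}$ in
\[
G(y) := \frac{(1+y)^2}{(1-y)^{2g}} .
\]

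\textbf{Real side.} The inner sum $\sum_{\ell}{g+1\choose \ell}{m-1\choose \ell-1}2^\ell$, with $m = 2k$ or $2k+1$ and the convention that it equals $1$ for $m=0$, is the weighted count $\sum 2^{\#\{i : h_i\ne 0\}}$ over $(h_1,\dots,h_{g+1})\in\N^{g+1}$ with $\sum h_i = m$, as in the proof of Lemma \ref{lem_rewriting_b_star}. Each coordinate contributes $1 + 2y/(1-y) = (1+y)/(1-y)$, so these counts are the coefficients of
\[
F(y) := \left(\frac{1+y}{1-y}\right)^{g+1}.
\]
The factor ${T-k+g-2\choose g-2}$ is the coefficient of $x^{T-k}$ in $(1-x)^{-(g-1)}$, and $g\ge 2$ makes this valid. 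Hence $\sum_T R^{0}(g,T)x^T$ is $(1-y^2)^{-(g-1)}$ times the even part of $F$. Likewise $\sum_T R^1(g,T) y^{2T+1}$ is $(1-y^2)^{-(g-1)}$ times the odd part of $F$. Because $(1-y^2)^{-(g-1)}$ is an even function, multiplying by it commutes with taking even and odd parts. So both generating functions are the even, respectively odd, part of
\[
\frac{(1+y)^{g+1}}{(1-y)^{g+1}(1-y)^{g-1}(1+y)^{g-1}} = \frac{(1+y)^2}{(1-y)^{2g}} = G(y).
\]
Comparing coefficients of $y^{2T+\bar n}$ gives $R^{\bar n}(g,T) = C^{\bar n}(g,T)$.

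\textbf{Main obstacle.} The main difficulty is bookkeeping rather than ideas. First, the $T=0$ and $k=0$ boundary terms must be handled correctly: the isolated term ${T+g-2\choose g-2}$ in $R^0$ is exactly the $m=0$ term of $F$. Second, the folding $\mathcal A(2m+1)=\mathcal A(2m)$ must produce precisely the shift by one encoded in the extra factor $(1+y)$ of $G$. I would double-check both against the explicit genus-one computation of Lemma \ref{lem_genus_one}, which has the same structure with $(1-y^2)^{0}$. Finally, the summation limits are harmless: every index satisfies $k\le T\le\lfloor n/2\rfloor$, so truncation at $\lfloor n/2 \rfloor$ does not affect any coefficient.
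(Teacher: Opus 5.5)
Your proof is correct. Its outer structure matches the paper's: you reduce maximality to the coefficientwise identity $R^{\bar n}(g,T)=C^{\bar n}(g,T)$, and you use that $\sum_T R^{\bar n}(g,T)x^T$ is $(1-x)^{-(g-1)}$ times the generating function of the inner sums $\mathcal S_k(g)$. Where you differ is in how you prove the core identity.

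The paper keeps $\mathcal S_k(g)$ as the binomial sum $\sum_{\ell}\binom{g+1}{\ell}\binom{2k-1}{\ell-1}2^{\ell}$ and writes $C^0(g,T)=[z^{2g-1}](1+z)^{2T+2g-3}(z+2)^2$. It then checks $[x^k](1-x)^{g-1}\sum_T C^0(g,T)x^T=\mathcal S_k(g)$ by coefficient extraction in an auxiliary variable $z$. This requires interchanging sums, extending the range of $j$, and simplifying $(1-(1+z)^{-2})^{g-1}$, and only the even case is written out.

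You instead go back to the meaning of $\mathcal S_k(g)$ as a weighted count over $\N^{g+1}$, which gives the product formula $F(y)=\bigl(\frac{1+y}{1-y}\bigr)^{g+1}$. You also keep the complex side unfolded as $G(y)=(1+y)^2(1-y)^{-2g}$. The folding $\mathcal A(2m+1)=\mathcal A(2m)$ then becomes taking the even or odd part in $y$, where $x=y^2$. The proposition reduces to the one-line identity $F(y)\,(1-y^2)^{-(g-1)}=G(y)$ plus the evenness of $(1-y^2)^{-(g-1)}$.

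Your route treats both parities of $n$ at once and explains why the identity holds. It also puts the genus-one case, where $G=F$, into the same picture. The paper's route avoids reinterpreting the inner sum combinatorially, but at the cost of a more delicate computation and an odd case left to the reader.
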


\begin{proof}
    It suffices to show that for any integer $T$ such that $0 \leq T \leq \lfloor \frac{n}{2} \rfloor$, one has $R^{\bar n}(g,T) = C^{\bar n}(g, T)$, where $\bar n$ is the reduction of $n$ modulo $2$.
    To prove this, we use generating functions.
    We only present the proof in the case in which $n$ is even. The case in which $n$ is odd can be treated similarly with some minor adjustments.
    For any $g \geq 2$, for any non-negative integer $k$, define 
    \begin{equation*}
        \mathcal S_k(g) =
        \begin{cases}
            \sum_{\ell=1}^{g+1} {g+1 \choose \ell} {2k-1 \choose \ell - 1} 2^{\ell} & \text{if } k \neq 0,\\
            1 & \text{if } k = 0.
        \end{cases}
    \end{equation*}
    
    One has $R(g,T) = \sum_{k=0}^{T} {T-k+g-2 \choose g-2} \mathcal S_k(g)$.

    We first notice that $\sum_{T \geq 0} R(g,T) x^T = \frac{1}{(1-x)^{g-1}} \sum_{k \geq 0} \mathcal S_k(g)$.
    
    Indeed, one has $\frac{1}{(1-x)^{g-1}} = \sum_{i\geq 0} {k+g-2\choose g-2}$.

    Hence, for any $T \geq 0$, the coefficient in front of $x^T$ in $\frac{1}{(1-x)^{g-1}} \sum_{k \geq 0} \mathcal S_k(g)$ is

    \begin{equation*}
        \sum_{k=0}^T {T-k+g-2 \choose g-2} \mathcal S_k(g).
    \end{equation*}

    The goal is now to prove that $(1-x)^{g-1} \sum_{T\geq 0} C(g, T) x^T = \sum_{k \geq 0} \mathcal S_k(g) x^k$. We first rewrite $C(g,T)$ in a form that will be easier to manipulate. Below, given an integer $i$ and a power series $P(z)$ in the variable $z$, we denote by $[z^i]P(z)$ the coefficient in front of $z^i$ in $P(z)$. 

    \begin{align*}
        C(g, T) &= 2 {2T + 2g-2 \choose 2g-1} + {2T + 2g - 3 \choose 2g-1} + {2T + 2g - 1 \choose 2g-1}\\
        &= [z^{2g-1}] \left( 2(1+z)^{2T+2g-2} + (1+z)^{2T+2g-2} + (1+z)^{2T+2g-1} \right)\\
        &= [z^{2g-1}] (1+z)^{2T+2g-3} ( 2(1+z) + 1 + (1+z)^2 )\\
        &= [z^{2g-1}] (1+z)^{2g-3} (1+z)^{2T}  (z+2)^2.
    \end{align*}

    In terms of generating functions, this means that 
    \begin{align*}
        \sum_{T \geq 0} C(g,T) x^T &= \sum_{T \geq 0} [z^{2g-1}] (1+z)^{2g-3} (1+z)^{2T}  (z+2)^2 x^T\\
        &= [z^{2g-1}] (1+z)^{2g-3} (z+2)^2 \sum_{T \geq 0} (1+z)^{2T} x^T.
    \end{align*}

    Now, for any $k \geq 0$, we want to prove that the coefficient in front of $x^k$ in $(1-x)^{g-1} \sum_{T\geq 0} C(g, T) x^T$ is equal to $\mathcal S_k(g)$. When $k=0$, it is immediate: 
    
    \begin{equation*}
        [x^0] (1-x)^{g-1} \sum_{T\geq 0} C(g, T) x^T = C(g,0) = 2 {2g-2 \choose 2g-1} + {2g - 3 \choose 2g-1} + {2g - 1 \choose 2g-1} = 1.
    \end{equation*}
    Let us fix a positive integer $k$. We want to extract the coefficient in front of $x^k$ in the expression $(1-x)^{g-1} \sum_{T\geq 0} C(g, T) x^T$. First, notice that

    \begin{align*}
        (1-x)^{g-1} \sum_{T\geq 0} C(g, T) &= \left( \sum_{j=0}^{g-1} (-1)^j {g-1 \choose j} x^j \right) [z^{2g-1}] (1+z)^{2g-3} (z+2)^2 \sum_{T \geq 0} (1+z)^{2T} x^T\\
        &= [z^{2g-1}] (1+z)^{2g-3} (z+2)^2 \left( \sum_{j=0}^{g-1} (-1)^j {g-1 \choose j} x^j \right) \left( \sum_{T \geq 0} (1+z)^{2T} x^T \right).
    \end{align*}

    Hence, we can write

    \begin{align*}
        [x^k](1-x)^{g-1} \sum_{T\geq 0} C(g, T) x^T = [z^{2g-1}] (z+2)^2 (1+z)^{2g-3} \sum_{j=0}^{\min(k, g-1)} (-1)^j {g-1 \choose j} (1+z)^{2(k-j)}.
    \end{align*}

    If $j >k$, then $(z+2)^2 (1+z)^{2g-3 + 2(k-j)} (-1)^j {g-1 \choose j}$ is of degree at most $2g-3 < 2g-1$ in $z$. Hence we can replace $\min(k, g-1)$ with $g-1$ in the previous expression.

    \begin{align*}
        [x^k](1-x)^{g-1} \sum_{T\geq 0} C(g, T) x^T &= [z^{2g-1}] (z+2)^2 (1+z)^{2g-3} \sum_{j=0}^{g-1} (-1)^j {g-1 \choose j} (1+z)^{2(k-j)}\\
        &= [z^{2g-1}] (z+2)^2 (1+z)^{2g-2k-3} \sum_{j=0}^{g-1} (-1)^j {g-1 \choose j} (1+z)^{-2j}\\
        &= [z^{2g-1}] (z+2)^2 (1+z)^{2g-2k-3} (1 - (1+z)^{-2})^{g-1}.
    \end{align*}

    Using $1- (1+z)^{-2} = \frac{(1-z)^2 -1}{(1-z)^2} = \frac{z(z+2)}{(1+z)^2}$, we write

    \begin{align*}
        [x^k](1-x)^{g-1} \sum_{T\geq 0} C(g, T) x^T &= [z^{2g-1}] (z+2)^2 (1+z)^{2g-2k-3} \frac{z^{g-1}(z+2)^{g-1}}{(1+z)^{2g-2}}\\
        &= [z^{2g-1}] z^{g-1} (z+2)^{g+1} (1+z)^{2k-1}\\
        &= [z^{g}] (z+2)^{g+1} (1+z)^{2k-1}\\
        &= [z^{g}] \left( \sum_{r=0}^{g+1} {g+1 \choose r} 2^{g+1-r} z^r \right) \left( \sum_{s=0}^{2k-1} {2k-1 \choose s} z^s \right)\\
        &= \sum_{r=0}^g {g+1 \choose r} 2^{g+1-r} {2k-1 \choose g-r}.
    \end{align*}

    It remains to perform the change of variables $\ell = g+1-r$ to obtain

    \begin{align*}
        [x^k](1-x)^{g-1} \sum_{T\geq 0} C(g, T) x^T = \sum_{\ell = 1}^{g+1} {g+1 \choose \ell} 2^{\ell} {2k-1 \choose \ell-1}.
    \end{align*}
\end{proof}

    We can now put everything together to prove Theorem \ref{main_thm}.

\begin{proof}[Proof of Theorem \ref{main_thm}]
    Let $C$ be a non-singular real algebraic curve of genus $g$.

    If $g=0$, then Lemmas \ref{lem_genus_zero_1} and \ref{lem_genus_zero_2} state that $B(\C C, n)$ is maximal if and only if $C$ is maximal or $n$ is even.

    If $g>0$ and $C$ is not maximal, then Proposition \ref{prop_non_maximal_curve} ensures that $B(\C C, n)$ is not maximal.

    If $g >0$ and $C$ is maximal, then Lemma \ref{lem_genus_one} and Proposition \ref{prop_genus_two_and_greater} state that $B(\C C, n)$ is maximal.
\end{proof}

\bibliographystyle{alpha}
\bibliography{biblio}

@article
{wilson,
author =       "G. Wilson",
title =        "Hilbert's sixteenth problem",
journal =      "\textrm{Topology}",
volume =       "17",
number =       "1",
pages =        "53--73",
year =         "1978",
DOI =          "https://doi.org/10.1016/0040-9383(78)90012-5"
}

@article{kharlamov,
	doi = {10.1070/rm2000v055n04abeh000315},
	url = {https://doi.org/10.1070%2Frm2000v055n04abeh000315},
	year = 2000,
	month = {aug},
	journal = {{IOP} Publishing},
	volume = {55},
	number = {4},
	pages = {735--814},
	author = {A. I. Degtyarev and V. M. Kharlamov},
	title = {Topological properties of real algebraic varieties: du cot{\'{e}
} de chez {R}okhlin},
  
	journal = {Russian Mathematical Surveys}
}

@article{harnack, 
    title="{Über die Vieltheiligkeit der ebenen algebraischen Curven}", 
    volume={10}, 
    journal={Mathematische Annalen}, 
    author={A. Harnack}, 
    year={1876}, 
    pages={189–198}
}

@article{bredon,
  title={Introduction to compact transformation groups},
  author={Bredon, G. E.},
  journal = {Academic Press, N.Y. - London},
  year={1972}
}

@misc{K,
 author = {Klein, F.},
 title = {Gesammelte mathematische {Abhandlungen}. {Dritter} {Band}: {Elliptische} {Funktionen}, insbesondere {Modulfunktionen}. {Hyperelliptische} und {Abelsche} {Funktionen}. {Riemannsche} {Funktionentheorie} und automorphe {Funktionen}. {Anhang}. {Herausgegeben} von {R}. {Fricke}, {H}. {Vermeil} und {E}. {Bessel}-{Hagen}. {Reprint}},
 year = {1973},
 language = {German},
 howpublished = {Berlin-Heidelberg-New {York}: {Springer}-{Verlag}. {IX}, 774 {S}. mit 138 {Textfig}., {Anhang} 35 {S}. {Preis} f{\"u}r alle 3 {B{\"a}nde} {DM} 210.00; \$ 77.70 (1973).},
 zbMATH = {3423988},
 Zbl = {0269.01017}
}

@article{fuks_configuration,
    author = {Fuks, D. B.},
    title = {Cohomology of the braid group mod 2},
    journal = {Funkcional. Anal.
i Prilov zen.},
    volume = {4.2},
    pages = {62--73},
    year = {1970}
}

@article{napolitano_configuration,
    author = {Napolitano, F.},
    title = {Configuration spaces on surfaces},
    journal = {C. R. Acad. Sci. Paris},
    volume = {327},
    issue = {1},
    pages = {887--892},
    year = {1998}
}

@article{vassiliev_real_polynomials,
    author = {Vassiliev, V. A.},
    title = {Homology of spaces of homogeneous polynomials in {$\mathbb R^2$} without multiple zeros},
    journal = {ArXiv preprint},
    year = {2014}
}

@article{symmetric_product_curves,
    author = {Biswas, I. and D'Mello, S.},
    title = {M-curves and symmetric products},
    journal = {Proc. Indian Acad. Sci. (Math. Sci.)},
    volume = 127,
    issue = 4,
    pages = {615--624},
    year = {2017}
}

@article{symmetric_products_generalized, 
    title={Symmetric Products of Equivariantly Formal Spaces},
    volume={61}, 
    DOI={10.4153/CMB-2017-032-0}, 
    number={2}, 
    journal={Canadian Mathematical Bulletin}, 
    author={Franz, Matthias}, 
    year={2018}, 
    pages={272–281}
    }

\end{document}